\definecolor{myred}{RGB}{183,18,52}
\definecolor{myyellow}{RGB}{254,213,1}
\definecolor{myblue}{RGB}{0,80,198}
\definecolor{mygreen}{RGB}{0,155,72}
\newcommand{\mS}{\mathcal{S}}
\newcommand{\cod}{\mbox{\rm cod}}
\newcommand{\Si}{{\Sigma}}
\newcommand{\Gr}{{\rm Gr}}
\newcommand{\io}{{\iota}}
\newcommand{\Ff}{{\mathcal F}}
\newcommand{\Jj}{{\mathcal J}}
\newcommand{\eps}{{\epsilon}}
\newcommand{\mT}{\mathcal{T}}
\newcommand{\mU}{\mathcal{U}}
\newcommand{\mC}{\mathcal{C}}
\newcommand{\Diff}{{\rm Diff}}
\newcommand{\Symp}{{\rm Symp}}
\newcommand{\mA}{\mathcal{A}}
\newcommand{\mF}{\mathcal{F}}
\newcommand{\mK}{\mathcal{K}}
\newcommand{\mE}{\mathcal{E}}
\newcommand{\mD}{\mathcal{D}}
\newcommand{\CC}{\mathbb{C}}
\newcommand{\RR}{\mathbb{R}}
\newcommand{\Dd}{{\mathcal D}}
\newcommand{\w}{\omega}
\newcommand{\om}{\omega}
\newcommand{\Fol}{{\rm Lef}}
\newtheorem{thm}{Theorem}[section]
\newtheorem{dfn}[thm]{Definition}
\newtheorem{cor}[thm]{Corollary}
\newtheorem{lma}[thm]{Lemma}
\newtheorem{Claim}[thm]{Claim}
\newtheorem{prp}[thm]{Proposition}
\newtheorem{rmk}[thm]{Remark}
\newtheorem{conj}[thm]{Conjecture}
\begin{document}
\title[Symplectic isotopy on ruled surfaces]{Symplectic isotopy on non-minimal ruled surfaces  }
\author{ Olguta Buse and Jun Li  }
%    Information for first author
%    Address of record for the research reported here

\date{\today}
\begin{abstract}
  We prove the stability of $\Symp(X,w)\cap \Diff_0(X)$ for a one-point blow-up of irrational ruled surfaces and study their topological colimit. Non-trivial generators of $\pi_0[\Symp(X,w)\cap \Diff_0(X)]$ that differ from Lagrangian Dehn twists are detected.
 \end{abstract}

\maketitle
\tableofcontents

\section{Introduction}\label{Intro}

In this note, we study some topological aspects of symplectomorphism groups of non-minimal irrational ruled surfaces.

Let $M_g=\Sigma_g \times S^2 $ be a topologically trivial  ruled surface and $M_g\# \overline{n \CC P^2}$ its $n$-point blow-up, with homology classes $B, F, E_1 \ldots E_n$ for the section, fiber and exceptional curves. By McDuff's classification results for minimal ruled surfaces, \cite{McD94}, any symplectic form is diffeomorphic to $\mu \sigma_{\Sigma_g} \oplus \sigma_{S^2}$ for some $ \mu >0$, up to diffeomorphisms and normalization. Following work of Li- Liu \cite{LLiu2}, such classification result also holds in the  blow-ups $M_g\# \overline{n \CC P^2}$. Namely, up to  diffeomorphism and normalization, a symplectic form on the blow-up is represented by a vector  $u=[\mu, 1, c_1, \cdots, c_n]$ \footnote{The components of $[\mu, 1, c_1, \cdots, c_n)] $ are the areas of the homology class $B, F, E_1, \cdots, E_n$, where $\mu>0, c_1+c_2<1, 0<c_i<1,$ $ c_1\geq c_2\geq \cdots \geq c_n, 2 \mu> c_1^2+ \ldots c_n^2 $} belonging  to a convex region %$\Delta^{n+1}$
in $\RR^{n+1}$ which describes the normalized reduced symplectic cone.

  Following the results of \cite{Abr98, AM99, McDacs, LP04, ALP, AGK09, Buse11, AP13, ALLP} etc, the present note addresses the topological behavior of the symplectomorphism groups as the form $\omega _u$  varies within the symplectic cone.

  {\em Informally speaking, we conjecture that for all blow-up ruled surfaces there exists a partition of the symplectic cone into chambers such that the homotopy type of the symplectomorphism groups does not change within a chamber.  Hence there exists a ``topological colimit'' when we stretch the area of $\Sigma_g$ to be very large. This further allows us to probe the symplectic isotopy question on non-minimal ruled surfaces by exhibiting smoothly but not symplectically trivial mapping classes that survive in the colimit.  }

  \subsection{Statements of the problems}

  We partition the reduced normalized symplectic cone, which we will name  $\Delta^{n+1}$, into countably many half open, half closed chambers by linear equations in $\RR^{n+1}$ such that each chamber has the same arithmetically admissible cohomology classes, i.e. positive when evaluated on the corresponding homology classes. % See section \ref{cone} for details.

\begin{figure}[ht]
  \centering
\[
\xy
(-20, -20)*{};(100, -20)* {}**\dir{--};
  (109, -20)* {\mu\to \infty};
(0, 0)*{}; (0, -20)*{}**\dir{-};
(0, -9)*{B-F};
(-20, -20)*{};(-10, 0)** {}**\crv{~*=<4pt>{.} (-18,-5)};
(0, -20)*{};(20, 0)** {}**\dir{-};
(20, -20)*{};(40, 0)** {}**\dir{-};
(40, -20)*{};(60, 0)** {}**\dir{-};
(20, 0)*{}; (20, -20)*{}**\dir{-};
(23, -9)*{B-2F};
(40, 0)*{}; (40, -20)*{}**\dir{-};
(43, -9)*{B-3F};
(-10, 0)*{};(100, 0)* {}**\dir{--};
(60, 0)*{}; (60, -20)*{}**\dir{-};
(60, -25)*{\cdots \cdots};
(80, 0)*{}; (80, -20)*{}**\dir[red, ultra thick, domain=0:6]{-};
(80, 3)*{\cdots \cdots };
  (-20, -23)*{\mu=0};
 % (-12, -14)*{\rotatebox{45}{\text{B-E}}};
  (10, -14)*{\rotatebox{45}{\text{B-F-E}}};
  (30, -14)*{\rotatebox{45}{\text{B-2F-E}}};
\endxy
\]
  \caption{(Normalized) Symplectic cone of one-point blow-up}
  \label{cfigd}
\end{figure}

Since most results of the paper concern the one-point blow-up, we begin by introducing Figure \ref{cfigd}, which shows the significant region for the normalized symplectic cone corresponding to the one-point blow-up. The region represents (normalized) cohomology classes in  $\RR^{2}$ and the walls are labeled with homology classes on which the symplectic form evaluates trivially.   See Section \ref{conepart} for details.

 The main problem addressed in this paper is that of topological stability of the symplectomorphism group  as the cohomology class  $u=[\mu,1, c_1, \cdots c_n]$ of the symplectic form varies within the chambers.   Since Li-Liu's results ensure a diffeomorphism between any cohomologous symplectic forms, the corresponding symplectomorphism groups are homeomorphic. Hence for any given class  $u=[\mu,1, c_1, \cdots c_n]$  we call $G_{u,g}^n$  be the intersection of $\Symp(M_g\# \overline{n \CC P^2},\w)$ with $\Diff_0(M_g\# \overline{n \CC P^2})$, the identity component of the diffeomorphism group.  We will tackle the following conjecture on the topology of  $G_{u,g}^n$:

\begin{conj}\label{stabconj}
 Let M be either $M_g $ or one of its blow-ups,   If two symplectic forms on $M$ are represented by $u_1$ and $u_2$ belonging to the same chamber then $\pi_i(G_{u_1,g}^n)= \pi_i(G_{u_2,g}^n)$, for all $i\geq 0$.
\end{conj}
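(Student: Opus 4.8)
The plan is to strengthen the claimed equality of homotopy groups into a local triviality statement: I would show that over any path lying in a single chamber the groups $G^n_{u,g}$ assemble into a locally trivial fibration, so that the homotopy lifting property forces all fibers, and in particular $G^n_{u_1,g}$ and $G^n_{u_2,g}$, to be homotopy equivalent. Because each chamber is cut out of $\Delta^{n+1}$ by finitely many linear inequalities it is convex, so I may join $u_1$ to $u_2$ by the segment $u_t=(1-t)u_1+tu_2$, which stays inside the chamber. By the Li--Liu classification every $u_t$ is realized by a symplectic form in the fixed deformation class, and the first task is to choose such a family $\{\om_t\}_{t\in[0,1]}$ depending smoothly on $t$ with $[\om_t]=u_t$. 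For this I would use McDuff's inflation: beginning from $\om_0$ and inflating along $J_0$-holomorphic representatives of the relevant curve classes, one moves the cohomology class along $u_t$ while staying symplectic, and convexity of the chamber guarantees the areas remain positive, so that no wall is crossed and the construction runs for all $t$.

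The engine for local triviality is the action of $\Symp(M,\om_t)$ on the contractible space $\mJ_{\om_t}$ of $\om_t$-compatible almost complex structures. I would stratify $\mJ_{\om_t}$ according to the configuration of embedded negative $J$-holomorphic curves that appear --- the classes $B-kF$, $B-kF-\sum E_i$, the exceptional classes $E_i$, and their disjoint unions. The decisive point is that a stratum labelled by a prescribed configuration is nonempty exactly when all the curve classes occurring in it have positive $\om_t$-area, and that its codimension in $\mJ_{\om_t}$ is determined purely by the homology classes involved, hence is independent of $t$. Since the walls of a chamber are precisely the loci where one of these classes has zero area, the set of admissible configurations --- together with all codimensions and incidence relations --- is constant as $t$ varies in the chamber. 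The homotopy type of $\Symp(M,\om_t)$, and of its intersection $G^n_{u_t,g}$ with $\Diff_0$, can then be read off from this stratified structure via the homotopy quotient, so constancy of the stratification yields local constancy of the homotopy type.

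The hard part will be turning the statement that ``the strata agree abstractly for every $t$'' into a genuine bundle statement for the family of topological groups $t\mapsto G^n_{u_t,g}$. This requires a parametrized Moser argument together with uniform Gromov compactness, to guarantee that as $t$ varies within the chamber the configurations of negative $J$-holomorphic curves deform continuously without bubbling or the sudden appearance or disappearance of a stratum; it is exactly here that the convexity of the chamber and the precise admissibility inequalities are used in an essential way. A second, more delicate point is the passage from $\Symp$ to its intersection with $\Diff_0$: restricting to the identity component of $\Diff$ could in principle alter the set of path components, so one must verify that the manner in which $\Diff_0$ meets $\Symp(M,\om_t)$ is itself stable across the chamber. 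For the one-point blow-up this can be controlled using the known homotopy type of $\Diff_0$ and its relation to the symplectic mapping class group, but for $n>1$, where the space of negative-curve configurations is higher-dimensional, keeping track of this interaction uniformly in $t$ is where I expect the main technical difficulty to lie.
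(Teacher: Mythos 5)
There is a genuine gap at the heart of your proposal: the inference from ``the set of admissible configurations, their codimensions, and their incidence relations is constant in $t$'' to ``the homotopy type of $G^n_{u_t,g}$ is locally constant.'' Two stratified spaces with identical combinatorial stratification data need not be homotopy equivalent; the homotopy type of each individual stratum, and the way strata are attached to one another, could a priori vary with $t$, and you have produced no map between the spaces attached to $\omega_{t_0}$ and $\omega_{t_1}$ (nor between their homotopy quotients) realizing the identification. The paper's proof supplies exactly the missing maps, and in the strongest possible form: since a single almost complex structure $J$ can be compatible with symplectic forms in \emph{different} cohomology classes, the spaces $\mA_u$ for different $u$ all live inside one ambient space of almost complex structures, and $J$-compatible inflation (Theorem \ref{ccinf}, resting on Zhang's singular foliation for curve existence at every non-generic $J$ and on the Li--Zhang tame-equals-compatible cone theorem) proves literal set-theoretic equalities and inclusions $\mA_{u_1,\mC}=\mA_{u_2,\mC}$, stratum by stratum (Proposition \ref{inflation}). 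These honest inclusions, fed through the homotopy fibration $G_u \to \Diff_0(M) \to \mA_u$ with the \emph{fixed} group $\Diff_0(M)$ in the middle (Proposition \ref{stabofsymp}), are what induce the maps $G_{u_1}\to G_{u_2}$ and the isomorphisms on all $\pi_i$. Abstract constancy of the combinatorics provides no such maps.

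Your proposed repair---local triviality of $t\mapsto G^n_{u_t,g}$ via a parametrized Moser argument plus uniform Gromov compactness---cannot work as stated, because Moser's theorem requires the cohomology class to be fixed, whereas along your segment $u_t$ the class varies by construction; the absence of Moser-type maps between symplectomorphism groups of non-cohomologous forms is precisely the ``main difficulty'' that the inflation machinery exists to circumvent. A symptom that something is wrong is that your argument would prove the conjecture for \emph{every} chamber, whereas the paper only establishes it for chambers with $\mu>g$: the leftward inclusion on the open stratum (Lemma \ref{incdown}) requires an embedded curve in a class $B+xF$ with $x\le g$ for every compatible $J$, which comes from the Li--Liu Gromov invariant computation $\Gr(B+gF)=2^g$ and forces the restriction $\mu>g$. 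Uniform Gromov compactness alone does not give you such curve existence for non-generic $J$, nor does it prevent strata from appearing or disappearing as $t$ moves; that is exactly the content the paper must, and your proposal does not, supply.
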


%This conjecture is stated in its strongest form.
Some variations of this conjecture can be addressed. For instance, one can prove stability for only a selected collection of these regions or only address the first $n$ levels of homotopy groups.

This conjecture for minimal models has been established by McDuff \cite{McDacs}  for $g=0$ and by Buse \cite{Buse11} for $g>0$. The conjecture for the rational surface cases with Euler number up to 12 has been proved by Anjos-Li-Li-Pinsonnault in   \cite{ALLP}.

 In the present paper, we establish this conjecture for the one-point blow-up cases. The many-point blow-up cases will be studied in a future work \cite{BL2}.

\begin{thm}\label{highgenus}
  Conjecture \ref{stabconj}  holds for  $M_g\# \overline{ \CC P^2}$, for all $g \geq 1$, with a symplectic form $\w$ such that $[\w]=[\mu, 1, c]$ is in the region $\mu >g$, i.e. $\w(\Si_g)> g\w(S^2).$

  More concretely, except for the first $2g-1$ chambers of Figure \ref{cfigd}, the stability holds in the two types of chambers shown in  Figure \ref{cfige} below.

Further, for all $\w$ with $u=[\w]=[\mu, 1, c]$ and $\w'$ with  $u'=[\w']=[\mu+\epsilon, 1, c], \epsilon>0$ and for all $i \le 2\lfloor \mu \rfloor -2g,$ we have $\pi_i(G_{u,g}^1)= \pi_i(G_{u',g}^1)$.

\begin{figure}[ht]
  \centering
\[
\xy
(20, -20)*{};(40, -20)* {}**\dir{--};
(0, 0)*{}; (0, -20)*{}**\dir{--};
(0, -20)*{};(20, 0)** {}**\dir{-};
 (-4, -12)*{{\text{B-pF}}};
  (10, -14)*{\rotatebox{45}{\text{B-pF-E}}};
  (30, -14)*{\rotatebox{45}{\text{B-qF-E}}};
   (48, -12)*{{\text{B-(q+1)F}}};
(20, -20)*{};(40, 0)** {}**\dir{--};
(40, 0)*{}; (40, -20)*{}**\dir{-};
(0, 0)*{};(20, 0)* {}**\dir{--};
\endxy
\]
  \caption{ Stability chambers for  $M_g\# \overline{ \CC P^2}$, $p,q \ge g$}
  \label{cfige}
\end{figure}

\end{thm}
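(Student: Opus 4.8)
The plan is to prove topological stability by exhibiting, for any two forms $u_1, u_2$ in the same chamber of Figure \ref{cfige}, a symplectomorphism-equivariant homotopy equivalence between the spaces of compatible almost complex structures, and then transferring this to the groups $G^1_{u,g}$ via a fibration argument. The foundational tool is that $G^1_{u,g}$ acts on the contractible space $\mathcal{J}_u$ of $\omega_u$-compatible almost complex structures, and this action is essentially transitive on the stratum of structures for which no exceptional or fiber class degenerates. Concretely, I would study the stratification of $\mathcal{J}_u$ according to which of the homology classes labeling the walls (namely $B-pF$, $B-pF-E$, etc.) are represented by embedded $J$-holomorphic spheres with negative self-intersection. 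Positivity of intersection together with the adjunction formula forces these representatives to be unique and disjoint when they exist, so the stratification is by the subset of ``available'' negative classes, and this subset is constant precisely within a chamber. This last point is where the combinatorics of Figure \ref{cfige} enters: the defining inequalities of a chamber are exactly the conditions that a fixed list of classes have positive symplectic area while the next ones do not.

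The key steps, in order, are as follows. First I would set up the space $\mathcal{J}_u$ and its top (open, dense) stratum $\mathcal{J}_u^{\mathrm{top}}$, and show that $G^1_{u,g}$ acts on $\mathcal{J}_u$ with $\mathcal{J}_u^{\mathrm{top}}$ a union of orbits whose homotopy type controls $\pi_*(G^1_{u,g})$ via the fibration $G^1_{u,g} \to \mathcal{J}_u^{\mathrm{top}} \to \mathcal{J}_u^{\mathrm{top}}/G^1_{u,g}$, using that the stabilizer of a generic $J$ is understood (it is built from the automorphisms of the underlying ruling, i.e. a Kähler isometry group). Second, I would show that as $u$ varies within a single chamber, the codimensions and configuration of the lower strata do not change: the set of homology classes of negative self-intersection that can be represented is determined by the sign pattern of the linear functionals cutting out the walls, and these signs are locally constant on a chamber. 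Third, I would construct an explicit deformation: given $u_1, u_2$ in the same chamber, I would connect them by a path $u_t$ and use Moser-type / Kronheimer-type family arguments together with the inflation technique to produce a diffeomorphism intertwining $\mathcal{J}_{u_1}$ and $\mathcal{J}_{u_2}$ compatibly with the stratifications, yielding $\pi_i(G^1_{u_1,g}) = \pi_i(G^1_{u_2,g})$ for all $i$. The restriction $\mu > g$ enters to guarantee that the fiber class $F$ and its combinations $B - pF$ behave as in the figure, i.e. that the relevant negative curves are precisely those enumerated and that no unexpected class of section type with large negative square appears; for $g \geq 1$ one must also incorporate the genus of the base, which makes the fibered structure and its automorphism group more delicate than in the rational case.

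I expect the main obstacle to be the analysis of the stabilizer of a generic $J$ and of the lower strata in the \emph{irrational} case $g \geq 1$, where one cannot simply invoke the rational-surface results of \cite{ALLP}. In the rational case the base is $S^2$ and the relevant automorphism and gauge groups are well understood, but for $g \geq 1$ the symplectomorphisms of the fibered structure involve the hyperbolic (or flat) geometry of $\Sigma_g$ and its large mapping class group, so one must carefully intersect with $\Diff_0$ and control the fibered almost complex structures whose fibers are the $J$-holomorphic spheres in class $F$. Handling this requires the minimal-model stability results of Buse \cite{Buse11} as the base case ($n=0$), then analyzing how a single blow-up modifies the stratification. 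The technical heart will be verifying that the one exceptional class $E$, together with its interactions $B - pF - E$, produces exactly the two chamber types in Figure \ref{cfige} and no others once $\mu > g$, and that the homotopy type of the top stratum is unchanged across the interior of each such chamber. I would therefore expect to spend most of the effort on a careful inductive/fibration bookkeeping argument reducing the blow-up group to the minimal group together with the data of the single exceptional sphere.
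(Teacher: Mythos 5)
Your proposal breaks down at its central step: the mechanism you give for comparing $u_1$ and $u_2$ in the same chamber cannot work. You propose to ``produce a diffeomorphism intertwining $\Jj_{u_1}$ and $\Jj_{u_2}$'' by Moser/Kronheimer-type arguments plus inflation, but no such diffeomorphism exists: $u_1=[\mu_1,1,c]$ and $u_2=[\mu_2,1,c]$ with $\mu_1\neq\mu_2$ are not proportional cohomology classes, so no diffeomorphism (and in particular no Moser isotopy) can carry $\w_{u_1}$ to $\w_{u_2}$ or conjugate their spaces of compatible structures. This is precisely the ``main difficulty'' the paper isolates in the introduction: there are no direct maps between the symplectomorphism groups of deformation-equivalent but non-cohomologous forms. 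The paper's resolution is to \emph{not} fix the form at all: it works with the larger, non-contractible space $\mA_u$ of almost complex structures compatible with \emph{some} form in the class $u$ (in a fixed isotopy component), which sits as the \emph{base} of the homotopy fibration $G_u\to\Diff_0(M)\to\mA_u$ with total space $\Diff_0(M)$ independent of $u$. Inflation is then used in the opposite direction from what you intend: rather than moving forms by diffeomorphisms, one fixes $J$ and changes the form through it. The $b^+=1$ $J$-compatible inflation theorem (McDuff--Buse inflation upgraded by the Li--Zhang tame-equals-compatible cone theorem), applied along curves whose existence is guaranteed for \emph{every} $J$ --- the classes $F$, $E$, $F-E$ from Zhang's singular foliation, a class $B+xF$ with $x\le g$ on the open stratum (via the Gromov invariant $\Gr(B+gF)=2^g\neq 0$; this is exactly where $\mu>g$ enters, since inflating along $B+xF$ can only push the normalized $\mu$ down to $x\le g$), and the defining negative curves $B-kF$, $B-kF-E$ on the positive-codimension strata --- shows that $\mA_{u_1}$ and $\mA_{u_2}$ are literally the \emph{same} stratified subset of the space of all almost complex structures on $M$. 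Equality of the bases then forces the fibers $G_{u_1}$, $G_{u_2}$ to be weakly homotopy equivalent; no equivariant map between the groups is ever constructed.

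A second, independent problem is your foundational claim that $G^1_{u,g}$ acts ``essentially transitively'' on the top stratum of the contractible space $\Jj_u$ of structures compatible with a fixed form, with stabilizer a K\"ahler isometry group, yielding a fibration $G^1_{u,g}\to\Jj_u^{\rm top}\to\Jj_u^{\rm top}/G^1_{u,g}$. Even in the rational case of Abreu--McDuff the action on a stratum is far from transitive (what they prove is that the inclusion of the orbit of an integrable $J$ into its stratum is a weak homotopy equivalence, which is a theorem, not a formality), and the quotient-fibration you write would require a free action. For $g\ge 1$ the picture is strictly worse: the strata carry genuine moduli (the paper's Proposition on stratification exhibits them only as Fr\'echet \emph{suborbifolds}, the orbifold structure coming from Teichm\"uller space of the genus-$g$ base and the blow-up point), and a generic compatible $J$ is non-integrable with essentially trivial stabilizer, so the homogeneous-space bookkeeping you propose --- and with it the reduction to Buse's minimal-case stabilizers --- does not get off the ground. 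What survives from your outline is the correct combinatorial observation that the set of representable negative classes is constant on chambers, and the correct identification of the key players ($B-pF$, $B-pF-E$, $E$, $F$, $F-E$); but the proof has to run through stratified spaces of almost complex structures over varying forms and curve-by-curve inflation, not through group actions on a contractible $\Jj_u$.
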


\newpage
Once we establish the stability Theorem \ref{highgenus}, this allows us to show the following topological colimit characterization.

\begin{thm}\label{tlimitintro}
  For $M_g\#\overline{\CC P^2}$, for all $g\geq 1$, there exists a topological colimit $G^1_{\infty,g}$ for the groups $G^1_{u,g}$  as $\mu\to \infty$.  Furthermore, $G^1_{\infty,g}$ has the homotopy type of a diffeomorphism group that we will call $\mD^1_g$.
  Moreover $\mD^1_g$ is disconnected for $g>1.$
\end{thm}

%induced from the blow-up of the fiberwise diffeomorphism of $M_g$ endowed with the split complex structure, see section \ref{s:out} and sequence \eqref{folpre} for details.

In Section \ref{s:out}, we are going to give the details of the construction of $\mD^1_g$.  A sufficient understanding of the topological colimit  allows us to conclude the following:

\begin{cor}\label{discon}
  $G^1_{u,g}$ is disconnected for $g>1$ and when the class of the symplectic  $[\w]=[\mu, 1, c]$ is in the region $\mu >g$.
\end{cor}

It remains an open question whether the topological colimit for $g=1$ is connected or not.

In \cite{MS17} the following conjecture is proposed as open problem 14 for minimal ruled surfaces.

\begin{conj}[Symplectic isotopy conjecture for ruled surfaces]\label{minisoconj}
 For $(M_g,\w) $,   a symplectomorphism is symplectically isotopic to identity if and only if it is smoothly isotopic to identity.
\end{conj}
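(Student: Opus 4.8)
\emph{Reduction.} Since any symplectic isotopy is in particular a smooth isotopy, the ``only if'' direction is immediate, and the content is the converse: every $\phi\in\Symp(M_g,\w)\cap\Diff_0(M_g)$ is symplectically isotopic to the identity; equivalently, the plan is to prove $\pi_0[\Symp(M_g,\w)\cap\Diff_0(M_g)]=1$. The strategy I would follow is to use the $J$-holomorphic ruling by fiber spheres to decouple the base $\Si_g$ from the fiber $S^2$. I would work with the contractible space $\mJ_\w$ of $\w$-compatible almost complex structures. For every $J\in\mJ_\w$ the fiber class $F$ (with $F\cdot F=0$ and $\langle c_1,F\rangle=2$) is represented by an embedded $J$-holomorphic sphere through each point; by positivity of intersections together with automatic transversality (valid for embedded spheres $C$ with $C\cdot C\ge -1$, here $C\cdot C=0$) these assemble into a smooth foliation $\mF_J$, exhibiting $M_g$ as a symplectic $S^2$-fibration $\pi_J\colon M_g\to B_J$ with $B_J\cong\Si_g$. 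I would fix a split $J_0$ whose ruling is the standard projection onto $\Si_g$.

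\emph{Straightening to a fibered map.} Because $\phi\in\Diff_0(M_g)$ it acts trivially on homology, so $\phi_*F=F$ and the push-forward $\phi_*\mF_{J_0}$ is again a foliation by $\phi_*J_0$-holomorphic spheres in class $F$. Connecting $\phi_*J_0$ to $J_0$ through $\mJ_\w$ and invoking a parametrized uniqueness statement for such foliations---an inflation/Moser argument producing a \emph{symplectic} isotopy that carries one foliation to another in the same class---I would straighten $\phi$, through symplectomorphisms, to a fibered symplectomorphism $\phi'$ preserving $\mF_{J_0}$. Such a $\phi'$ descends to a diffeomorphism $\bar\phi'$ of the base $\Si_g$.

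\emph{Reducing the base, then the fiber.} As the straightening is carried out by symplectic isotopies, $\bar\phi'$ remains in $\Diff_0(\Si_g)$. For $g\ge 2$ the group $\Diff_0(\Si_g)$ is contractible (Earle--Eells), so $\bar\phi'$ contracts to the identity; lifting this base isotopy via the symplectic connection of $\pi_{J_0}$, I would arrange that $\phi'$ covers the identity, i.e. $\phi'$ is \emph{vertical}, restricting on each fiber to an area-preserving diffeomorphism of $S^2$. Thus $\phi'$ is classified by a homotopy class of maps $\Si_g\to\Symp(S^2)$. The decisive point is that the inclusion $\Symp(S^2)\hookrightarrow\Diff^+(S^2)$ is a homotopy equivalence (both are $\simeq SO(3)$), whence $\pi_0\Map(\Si_g,\Symp(S^2))\xrightarrow{\cong}\pi_0\Map(\Si_g,\Diff^+(S^2))$; the hypothesis that $\phi'$ be \emph{smoothly} isotopic to the identity then forces it into the identity component of vertical symplectomorphisms, so $\phi'$---hence $\phi$---is symplectically isotopic to the identity.

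\emph{Main obstacle.} I expect the hard part to be the straightening step: making the passage from a smooth $F$-foliation to an $\mF_{J_0}$-preserving symplectomorphism rigorous and sufficiently parametrized, while certifying that \emph{no symplectic isotopy information is lost}, so that the smooth-isotopy hypothesis is faithfully transported to the base and fiber factors. The genus $g=1$ case is genuinely more delicate, since $\Diff_0(T^2)\simeq T^2$ is not contractible; the extra loop in the base must be analyzed directly, which is precisely the reason the analogous question is left open for $g=1$ near Corollary \ref{discon}.
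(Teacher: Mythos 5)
You should first be clear about the status of this statement in the paper: Conjecture \ref{minisoconj} is not a theorem here and has no proof in the paper. It is quoted from \cite{MS17} (open problem 14) and remains open in general; the paper's only substantive contact with it is negative, namely Corollary \ref{discon}, which shows that the \emph{analogous} statement fails for the one-point blow-up $M_g\#\overline{\CC P^2}$, $g>1$. So your proposal has to be judged as an attempt at an open problem, and it contains two genuine gaps, each of which conceals essentially the entire difficulty.

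The first gap is the straightening step. The transitivity actually available for foliations by spheres in class $F$ is transitivity of $\Diff_0$, not of $\Symp(M_g,\w)$ (compare Lemma \ref{tranfol}, which is purely smooth). If $\psi\in\Diff_0$ carries $\mF_{J_0}$ to $\phi_*\mF_{J_0}$, upgrading $\psi$ to a symplectomorphism requires a Moser argument along a path from $\w$ to $\psi^*\w$; but the obvious linear path need not consist of symplectic forms (the two forms tame different almost complex structures), and even when Moser does apply, the correcting isotopy destroys the foliation-preserving property you just arranged, so the output is no longer fibered. There is no soft way around this: the natural parametrized version of your step --- that $\Symp(M_g,\w)$ acts transitively, with local sections, on the weakly contractible space of foliations by $\w$-symplectic $F$-spheres --- would force $\Symp(M_g,\w)\cap\Diff_0$ to be weakly equivalent to the fibered symplectomorphism group, whose homotopy type does not depend on $\mu$. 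For $g=0$ this contradicts Abreu--McDuff \cite{AM99}, and for $g>0$ it contradicts the chamber/stability phenomenon that \cite{McDacs,Buse11} and the present paper establish. This is exactly the point where inflation along embedded curves, with its genus-dependent restrictions (the $\mu>g$ type hypotheses in Theorem \ref{highgenus}), is forced into any known argument, and exactly why the conjecture is open outside the stable range.

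The second gap is the final step, and it is the one this paper directly illuminates. From the hypothesis you only know $\phi'\in\Diff_0(M_g)$; you then conclude that $\phi'$ lies in the identity component of the \emph{vertical} symplectomorphisms. The isomorphism $\pi_0\Map(\Si_g,\Symp(S^2))\cong\pi_0\Map(\Si_g,\Diff^+(S^2))$ is true but does not help until you know that $[\phi']$ vanishes in $\pi_0$ of the \emph{fiber-preserving} diffeomorphism group; being smoothly isotopic to the identity through \emph{arbitrary} diffeomorphisms gives no such thing a priori. What you need is precisely that $\mD^0_g\cap\Diff_0(M_g)$ is connected, i.e.\ that a fibered diffeomorphism smoothly isotopic to the identity is isotopic to it \emph{through fibered maps}; this is a hard theorem of McDuff, obtained in \cite{McDacs} only via the stability/colimit machinery. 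Moreover, the present paper proves that this exact inference pattern fails after one blow-up: Proposition \ref{tlimitintro}(2) uses the Birman exact sequence to produce a foliation-preserving diffeomorphism of $M_g\#\overline{\CC P^2}$ that is smoothly isotopic to the identity in $\Diff_0$ but not isotopic to it through foliation-preserving maps --- this is the very source of the exotic symplectomorphisms of Corollary \ref{discon}. Since your argument nowhere uses minimality at this step (an adaptation using the singular foliations of Lemma \ref{lma-ruled2} would run the same way), any correct proof must make essential use of the absence of the marked point created by the exceptional fiber, which your proposal does not. A smaller instance of the same error occurs earlier: from $\phi'\in\Diff_0(M_g)$ you may only conclude that the base map $\bar\phi'$ acts trivially on $H_1(\Si_g)$, i.e.\ lies in the Torelli group, not that $\bar\phi'\in\Diff_0(\Si_g)$.
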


Here we remind the reader that the terminology "symplectic isotopy" here and in \cite{MS17} for the above conjecture refer to isotopy of symplectic maps, rather than of symplectic submanifold.

Our Corollary \ref{discon} shows that the corresponding conjecture is not true for the one-point blow-up of ruled surfaces.  There exist ``exotic symplectomorphisms'' that are smoothly but not symplectically isotopic to identity.  Notice that for topological reasons, there are no Lagrangian spheres inside $M_g\#\overline{\CC P^2}$, and hence those ``exotic symplectomorphisms'' are not generated by Dehn twists along Lagrangian spheres.

It is interesting to compare this result with Shevshishin-Simirov \cite{SS17elliptic}, who detected  exotic symplectomorphisms only in the case $g=1$ for $M_1\#\overline{\CC P^2}$. Their construction is quite different from ours and does not survive in the colimit.  It is also worth pointing out that our construction has a connection to the fibered Dehn twists of Biran-Giroux (cf. \cite{BirGir}).  See Remark \ref{fiberdehn} for more discussions.

\vspace{.3cm}

\subsection{Techniques of proofs}
The main difficulty in approaching the main conjecture is the absence of direct maps between symplectomorphism groups that correspond to two deformation equivalent symplectic forms on a given manifold.

McDuff's approach in  \cite{McDacs} was to consider the Kronheimer's fibration

\begin{equation}\label{fib}
  \Symp(M, \w)\cap \Diff_0(M) \to \Diff_0(M) \to \mT_{\w},
\end{equation}
where $ \mT_{\w}$ represents the space of symplectic forms in the class $[\w]$ and isotopic to a given form, and $\Diff_0(M)$ is the identity component of the diffeomorphism group (See  \cite{Kro99}). Moser's technique grants a transitive action of $\Diff_0(M)$ on  $ \mT_{\w}$ and hence gives us the fibration \eqref{fib}.

Following  McDuff's work in \cite{McDacs}\footnote{ McDuff's original results were written in terms of homotopy fibrations where the larger space of taming almost complex structures was used. Since this space is homotopy equivalent to the space of compatible structures we use (for reasons explained in Section \ref{mininf}) we will use the compatible structure spaces instead.}, one uses the (weak) homotopy equivalence between $ \mT_{\w}
$ and the space $\mA_{\w}$ (which is the space of $\w'$-compatible almost complex structures, where $\w'$ is any symplectic form isotopic to $\w$) to construct a homotopy fibration
\begin{equation}\label{fibacs}
    \Symp(M, \w)\cap \Diff_0(M) \to\Diff_0(M) \to \mA_{\w}.
\end{equation}

By the inflation technique of Section \ref{stabproof} one can relate (by direct inclusions of strata) the spaces $\mA_{\w}$'s for $\w$ in different cohomology classes and hence prove stability results on $\Symp(M,\w)$.
Thus the heart of the matter remains to establish such strata and inclusions in the given setting. That involves:

(a) establishing the existence of sufficient inflation techniques

(b) existence of sufficiently many $J$-holomorphic embedded (or nodal) curves for non-generic almost complex structures so that the inflation can be performed.

{\em Most techniques used here concern the study of spaces of almost complex structures (not necessarily generic ones) and the $J$-holomorphic curves they admit.}

\noindent\textbf{Acknowledgements.} The second named author is supported by an AMS-Simons travel grant. We are grateful to  Richard Hind, Tian-Jun Li, Dusa McDuff, Weiwei Wu, Weiyi Zhang for helpful conversations.  We thank an anonymous referee for his/her very careful reading and many helpful suggestions throughout the paper.

\section{The symplectic cone and its partition}\label{cone}

\subsection{The normalized reduced symplectic cone}

Let $K_0$ be the standard symplectic canonical class, $\mE$ be the set of all $K_0$ -symplectic exceptional $-1$ sphere classes, and $P$ be the set of positive two-forms, i.e. $P:= \{\alpha\in H^2(M, \RR), | \alpha \wedge \alpha >0\}.$  Let $\mC_M$ denote the symplectic cone.

In Theorem 4 of \cite{LL01}, Li-Liu showed that if $M$ is a closed, oriented 4-manifold with $b^+= 1$
and if the symplectic cone  $\mC_M$ is nonempty, then
$$\mC_M= \{e \in P |0 < |e \cdot E| \quad \text{for all} \quad E \in \mE \}.$$

Moreover, from work in \cite{LL01} the following is true:

\begin{prp}\label{coner}
 Choose the standard basis $B, F, E_1, \cdots, E_n$ in $M_g^n$ and associate area coefficients for a symplectic class $[\mu, \alpha, c_1, \cdots, c_n]$.

In  any such ruled symplectic manifold a symplectic form is always diffeomorphic to a reduced normalized  symplectic form whose class is obtained from:

\begin{itemize}

\item {\bf All  symplectic classes} are  given by $[\omega]^2>0$ and the  symplectic form is positive on all $K_0$ symplectic  $-1$ sphere classes given by $E_i$ and $F-E_i$,

\item {\bf All   reduced symplectic classes} are given by the extra condition that the  symplectic form is positive on all $K_0$ symplectic  $-2$ sphere classes given by $E_i-E_j$, $i<j$ and $F-E_i-E_j$.

\item We can moreover normalize the symplectic form such that $F$ has unitary symplectic area.
\end{itemize}

Summarizing the conditions, up to normalization and diffeomorphisms  all the vectors $[\mu, 1, c_1, \cdots, c_n] $ in  the normalized reduced cone
satisfy the following equations:
 $ c_1+c_2<1, 0<c_i<1, c_1\geq c_2\geq \cdots \geq c_n,$ and $  2 \mu> c_1^2+ \ldots c_n^2 .$

\end{prp}

The normalized reduced  vectors  $u=[\mu, 1, c_1, \cdots, c_n]$ belong to a convex region $\Delta^{n+1}$ in $\RR^{n+1}$, whose boundary walls are $n$-dimensional convex regions given by linear equations.

We will refer to $\Delta^{n+1}$ as the normalized reduced symplectic cone.
Its boundary walls are $n$-dimensional convex regions given by linear equations. We will be concerned with symplectic deformations inside this region $\Delta^{n+1}$ for the $n$-point blow-ups.

Note that we partition the normalized symplectic cone by looking at the homology classes of potential symplectic surfaces. To that end, we will now fix some notation:

\begin{dfn}\label{sw}

Let $\mathcal S_{\omega}$ denote the set of homology classes of embedded $\omega$-symplectic surfaces and $K_{\w}$ the symplectic canonical class. For any $A\in \mathcal S_{\omega}$, the adjunction formula gives
  \begin{equation}\label{AF}
    K_{\omega}\cdot A=-A\cdot A -2+2g(A).
\end{equation}

For each $A\in  \mathcal S_{\omega} $ we associate the integer
$$ {\cod_A}=2(-A\cdot A-1+g).$$

\end{dfn}
 By \cite{LL01}, if $M$ is a closed, oriented 4-manifold with $b^+= 1$, the symplectic canonical class is unique once we fix $u=[\w]$. We denote it by $K_u$.

    We denote $\mS^{<0}_\omega$ the subset of $\mS_\omega$ having negative self-intersections.

\begin{rmk} The sets introduced in  Definition \ref{sw} depend only on the cohomology class of $\omega.$
\begin{itemize}
  \item By Lemma 2.4 in \cite{ALLP} $\mS_w$ depends only on the cohomology class of $\omega.$
  \item To explain that $\mS^{<0}_\omega$ depends on the cohomological data of the symplectic class only, we follow the main result from Li-Usher \cite{LU06}. By performing standard blow ups from minimal surfaces we find some symplectic form $\omega'$ with an integral class  $[\omega']$ that admits some embedded symplectic curve $D$ in any negative self intersection class of  $\mS^{<0}_{\omega'}$. The symplectic inflation of
   \cite{LU06} allows us to produce symplectic forms in any other class   $u$ which pairs positively with $D$.

   %subset of $\mS_u$ having negative self-intersections.

  %By the main theorem in \cite{LU06}, the negative self-intersection classes that admit embedded representatives are exactly those that have positive pairings with the class $\w.$ Namely, we can find some integral class $u'$ that admits some embedded curve in those classes of  $\mS^{<0}_u$, and the symplectic inflation of
 \end{itemize}
 Consequently, we can define $\mS_u$, where $u=[\w]$, using only the cohomology data of $\w$.   We denote $\mS^{<0}_u$ the corresponding subset.
\end{rmk}

\begin{dfn} \label{wall}
We call an {\bf interior wall} a nonempty convex subset of the symplectic cone $\Delta^{n+1}$ given by $\{u| \ u\cdot A=0,$  when  $A$  belongs to some nonempty subset of  $\mS^{< 0}_u \}$.

 We call a {\bf boundary  wall} a
 smooth connected maximal dimension subset  of the complement of the interior of $ \Delta^{n+1}$ with respect to the  closure of $ \overline{\Delta^{n+1}}.$

 We will call $$ {\cod_u}= max \{\cod_A,  A\in \mS^{< 0}_u, u\cdot A \geq 0 \}.$$

 In particular, note that the $cod_u$  potential changes are precisely across  interior walls.
\end{dfn}

We partition the normalized reduced symplectic cone  by interior walls and boundary walls into chambers. This partition is more complicated for several points blow-ups.
For a one-point blowup, we now give an explicit description of the chambers.

 \subsection{Partition of the normalized reduced symplectic cone}\label{conepart}

When $M=M_g\# \overline{ \CC P^2},$   Figure \ref{cfigd1} represents $ \Delta^{2}.$  More precisely, following Proposition \ref{coner} it is given by vectors $[\mu,1,c]$ with $1>c>0, 2\mu>c^2$. Note that if $\mu >\frac1 2$ the second volume condition is satisfied automatically as $2 \mu \geq 1 >c^2.$
The following Figure \ref{cfigd1} gives  the cone in the one point blow up case:

\begin{figure}[ht]
  \centering
\[
\xy
(-20, -20)*{};(100, -20)* {}**\dir{--};
  (109, -20)* {\mu\to \infty};
(0, 0)*{}; (0, -20)*{}**\dir{-};
(0, -9)*{B-F};
(-20, -20)*{};(-10, 0)** {}**\crv{~*=<4pt>{.} (-18,-5)};
(0, -20)*{};(20, 0)** {}**\dir{-};
(20, -20)*{};(40, 0)** {}**\dir{-};
(40, -20)*{};(60, 0)** {}**\dir{-};
(20, 0)*{}; (20, -20)*{}**\dir{-};
(23, -9)*{B-2F};
(40, 0)*{}; (40, -20)*{}**\dir{-};
(43, -9)*{B-3F};
(-10, 0)*{};(100, 0)* {}**\dir{--};
(60, 0)*{}; (60, -20)*{}**\dir{-};
(60, -25)*{\cdots \cdots};
(80, 0)*{}; (80, -20)*{}**\dir[red, ultra thick, domain=0:6]{-};
(80, 3)*{\cdots \cdots };
  (-20, -23)*{\mu=0};
 % (-12, -14)*{\rotatebox{45}{\text{B-E}}};
  (10, -14)*{\rotatebox{45}{\text{B-F-E}}};
  (30, -14)*{\rotatebox{45}{\text{B-2F-E}}};
\endxy
\]
  \caption{(Normalized) reduced symplectic cone of a one-point blow-up}\label{cfigd1}
\end{figure}

The dotted boundary curve on the left is provided by the volume constraints. The chambers that we consider  in the one-point blow-up cases all have $\mu >1$ and are half open half closed (to the  right) two-dimensional regions given explicitly;  the following inequalities are sufficient to describe these chambers:
\begin{itemize}
\item The top open boundary wall is given by $u\cdot E=1;$ (excluded from chambers ) while  the bottom open boundary wall is given by
 $u\cdot E=0$ ( also excluded from chambers.)

  \item The interior right wall for the $(2k+1)$-th chamber is provided by the equalities
    $u\cdot [B-(k+1)F-E]=0$ (included from the chamber ) while the left wall is  $u \cdot [B-(k+1)F] =0$ (excluded in the chamber).
  \item The  interior  walls for the $(2k)$-th are provided by the inequalities:
    $u\cdot [B-(k+1)F]=0$,(on the right, included) $u\cdot [B-kF-E] =0$(on the left, excluded).
\item We will say that $u$ is in the positive range of $B-kF$ or $B-kF-E$ respectively if $u \cdot (B-kF)>0$ or $u \cdot (B-kF-E )>0$ respectively. We will refer to such classes $u$ as being {\it in the $B-kF$ or $B-kF-E$ positive cones}.

\end{itemize}

Note that in Figure \ref{cfigd1}, the bottom dashed line consists of  degenerate cohomology classes (trivial on the class $E$) and the line mimics a symplectic minimal ruled surface which is the product $\Sigma_g\times S^2$ with $\w(\Sigma_g)/\w(S^2) =\mu.$  In turn, the top dashed line consists of homology classes trivial on $F- E$  and mimics a symplectic minimal ruled surface which is the non-trivial bundle $\Sigma_g \tilde{\times} S^2$ with $\w(\Sigma_g)/\w(S^2) =\mu.$    The interior of the cone is represented by symplectic forms on a one-point blow-up of the minimal ruled surface, such that $\w(\Sigma_g)=\mU$, $\w(S^2)=1,$ and $0<\w(E)=c<1.$

\begin{rmk}\label{singlecurve}

For one point blow up  these chambers can be labeled by a single  negative self intersection class since any two homologically distinct  negative self intersection curves have a negative intersection pairing.

\end{rmk}

\section{Homotopy fibration and the stratification of  $\mA_u$}

  The homotopy fibration \eqref{fibacs} presented in the introduction has been fruitfully applied to study the topology of $G_{\w}$, especially in dimension 4, cf. \cite{McDacs}, \cite{Buse11}, \cite{AP13}. Now let us focus on the case of non-minimal ruled surfaces. Following the same strategies employed by \cite{LL16}, one needs to find a sufficiently fine stratification of the spaces of almost complex structures and show that they only differ by the addition of said (finite codimension) strata when $\w$ crosses the walls of the chambers of the arithmetic regions in  $\Delta^{n+1}$.

We will consider the larger space $\mA_u$ of almost complex structures compatible with some $\w$ that satisfies $[\w]= u$. This space is potentially disconnected, and in a discussion on this topic included in Section \ref{b+1inf} we will show that we are able to identify a canonical path component within each $\mA_u.$

\subsection{ Stratification of the spaces of almost complex structures}

First we define the prime subset of the space of almost complex structures $\mA_{u, \mC}$ labeled by set $\mC\subset \mathcal S^{< 0}_u$ for a given isotopy class of $\w$ as follows:

 \begin{dfn} \label{fine decomposition}
  A subset  $\mC\subset \mathcal S^{< 0}_u$ is admissible if
  \[\mC=\{A_1, \cdots, A_i,\cdots ,A_q |\quad  A_i\cdot A_j \geq 0, \text{ for all }  i\neq j\}.\]
  Given an admissible subset $\mC$, we  define the real codimension
  of the label set ${\mC}$ as
  \[\cod({\mC})= \sum_{A_i\in \mC} \cod_{A_i}=\sum_{A_i\in \mC}  2(-A_i\cdot A_i-1+g_i).\]
  Define the {\bf prime subset} $\mA_{u, \mC}$ as the collection of $J \in \mA_{u}$ for which $A \in \mathcal S_{u}^{<0}$ has an embedded $J$-holomorphic representative if and only if $A\in \mC\}$.
  % The prime subset  $\mA_{\emptyset}$   is generally denoted by  $\mA_{u, {\rm open}}$.
If $\mC=\{A\}$ contains only one class $A$, we abbreviate $\mA_A$ for $\mA_{\{A\}}$.
 \end{dfn}

  Notice that these prime subsets are disjoint and
 we have the decomposition $\mA_{u} =\amalg_{\mC} \mA_{u,\mC}.$

We define the  open stratum  $\mA_{u, {\rm open}}$ as the complement of positive codimension strata in $\mA_{u}$.

 \begin{rmk}
  Note   that we do not have control of what type of classes with non-negative self intersection are $J$-holomorphic embedded in $\mA_{u, {\rm open}}$.  For different $J \in  \mA_{u, {\rm open}},$ there could be different embedded curves in the section classes $B, B+F, B+2F, \cdots;$  see proof of Proposition \ref{curvesexist} for details.
 \end{rmk}

 For $\mS^{< 0}_u$,  the following shows that the prime subsets are well-behaved analytically.
 \begin{prp}\label{stratum}

Let $(X,\omega)$ be a  ruled 4-manifold  such that $[\omega]=u$.
 Suppose $U_{\mC}\subset\mA_u$ is a subset
characterized by the existence of a configuration of embedded
$J$-holomorphic  curves $C_{1}\cup C_{2}\cup\cdots\cup C_{N}$ of  positive codimension as in Definition \ref{sw} with $\{ [C_{1}], [C_{2}],\cdots , [C_{N}]\}=\mC$.
Then $U_{\mC}$ is a co-oriented Fr\'echet suborbifold
   of $\mA_u$ of (real) codimension $2N-2c_{1}([C_{1}]+\cdots+ [C_{N}])=\sum_i K\cdot [C_i]-[C_i]^2.$

\end{prp}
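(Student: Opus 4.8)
The plan is to realize each prime subset as the image, under the forgetful projection to $\mA_\omega$, of a universal moduli space of embedded curves, and to read off both the codimension and the co-orientation from the normal Cauchy--Riemann operators of the curves in the configuration. Following the stratification scheme of \cite{LL16}, I first treat a single class $A\in\mS_\omega^{<0}$ with $\cod_A>0$, and then assemble the $N$-curve configuration.

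\emph{Single curve.} Let $A=[C]$ with $A\cdot A<0$, and set
\[
\widetilde{\mM}_A=\{(J,u)\ :\ J\in\mA_\omega,\ u\ \text{a simple }J\text{-holomorphic map with embedded image in class }A\}.
\]
Passing to suitable Banach ($C^\ell$ or $W^{k,p}$) completions, the standard universal transversality argument---varying $J$ near an injective point of $u$---shows $\widetilde{\mM}_A$ is a smooth Banach manifold, since the universal linearized operator is surjective for somewhere injective curves. Quotienting by reparametrization yields $\mM_A$, and I study the projection $\pi\colon\mM_A\to\mA_\omega$, which is Fredholm of index equal to that of the normal operator $D_{N_C}$ on $N_C$, namely $\mathrm{ind}_{\RR}D_{N_C}=2(A\cdot A+1-g)=-\cod_A<0$. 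Two facts pin down the image. First, a nonzero element of $\ker D_{N_C}$ would be a section of a complex line bundle of negative degree $A\cdot A$ satisfying a Cauchy--Riemann equation; by the similarity principle its zeros count positively toward $\deg N_C<0$, which is impossible, so $\ker D_{N_C}=0$ and $d\pi$ is injective. Second, if some $J$ admitted two distinct embedded representatives $C,C'$ of $A$, positivity of intersections would force $A\cdot A=C\cdot C'\ge 0$, contradicting $A\cdot A<0$; hence the representative is unique and $\pi$ is injective. Thus $\pi$ is an injective Fredholm immersion with cokernel of constant real rank $\cod_A$, and its image $\mA_A$ is a suborbifold of codimension $\cod_A=K\cdot A-A\cdot A$. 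The normal bundle is $\coker D_{N_C}\cong H^1(C;N_C)$, whose complex structure supplies the co-orientation, and the finite automorphism group of $(u,\text{domain})$ produces the orbifold (rather than manifold) structure.

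\emph{Configurations.} For $\mC=\{A_1,\dots,A_N\}$ with $A_i\cdot A_j\ge 0$ for $i\ne j$, each class is represented by a unique embedded curve as above, so the configuration is determined by $J$ and $U_{\mC}=\bigcap_i\mA_{A_i}$. To conclude that codimensions add, I will show the strata $\mA_{A_i}$ meet transversally. The key geometric input is that each $C_i$ has injective points disjoint from the finitely many intersection points $C_i\cap C_j$; a perturbation of $J$ supported near such a point realizes any prescribed element of $\coker D_{N_{C_i}}$ while leaving the $C_j$-conditions unaffected. Hence the conormal directions of the $\mA_{A_i}$ are independent, the intersection is transverse, and
\[
\codim_{\RR} U_{\mC}=\sum_{i=1}^N \cod_{A_i}=\sum_{i=1}^N\bigl(K\cdot A_i-A_i\cdot A_i\bigr),
\]
with co-orientation induced by the complex structure on $\bigoplus_i\coker D_{N_{C_i}}$. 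Elliptic regularity then shows the locus cut out in each completion consists of smooth $J$ and that local slices are compatible across completions, upgrading the conclusion to a Fréchet suborbifold. The ruled-surface case is precisely the specialization in which every $A_i\cdot A_i<0$, so that $\ker D_{N_{C_i}}=0$ and the uniqueness of representatives hold automatically.

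\emph{Main obstacle.} The smoothness of the universal moduli space and the Banach-to-Fréchet passage are routine. The crux is the transversality of the several strata when the curves genuinely intersect: I must arrange the $J$-perturbations that surject onto each $\coker D_{N_{C_i}}$ so as to decouple the $N$ conditions, which is exactly where the disjointness of the chosen injective points from the intersection locus is essential. A secondary delicate point, for curves of genus $g>0$, is the constancy of $\dim\coker D_{N_{C_i}}$ and of its complex structure along the stratum (via automatic transversality in dimension four together with the similarity principle); this is what makes each $\mA_{A_i}$ a genuinely co-oriented suborbifold rather than merely a closed subset.
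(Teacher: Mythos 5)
Your architecture is genuinely different from the paper's proof, which is an assembly of citations: the Banach-analytic structure of the curve locus from Theorem 2.1.2 of \cite{IS99}, the local chart construction from Appendix B of \cite{AP13}, and the orbifold structure from Lemma 2.6 of \cite{McDacs}. Your single-curve ingredients are correct and standard: $\ker D_{N_C}=0$ by the similarity principle when $[C]\cdot[C]<0$, uniqueness of the representative by positivity of intersections, the index count $\mathrm{ind}_{\RR}D_{N_C}=-\cod_{[C]}$, and the localized-perturbation argument for decoupling the $N$ conditions. But there is a genuine gap at the pivotal step: from ``$\pi$ is an injective Fredholm immersion with cokernel of constant real rank $\cod_A$'' you conclude ``its image is a suborbifold of codimension $\cod_A$.'' That inference is invalid even in finite dimensions: an injective immersion need not be a homeomorphism onto its image, and its image need not be locally Euclidean. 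What is missing is local properness of $\pi$: if $J_n\to J_0$ with all $J_n$ and $J_0$ admitting embedded representatives $C_n$ and $C_0$ of $A$, you must show $C_n\to C_0$ in $C^\infty$, so that the implicit-function-theorem chart around $(J_0,C_0)$ captures \emph{all} of $U_{\{A\}}$ near $J_0$. This needs Gromov compactness together with an argument excluding nodal degeneration and domain pinching: if no component of a nodal limit equals $C_0$, pairing with $[C_0]$ and positivity of intersections contradicts $A\cdot A<0$; if some component equals $C_0$, positivity of $\omega$-areas excludes further nonconstant components, and a degree/genus argument excludes pinching. None of this appears in your proposal; in fact your ``main obstacle'' paragraph identifies the multi-curve decoupling as the crux, whereas that part is comparatively routine --- the local structure of a single stratum is the real content, and it is exactly what the paper imports from \cite{IS99} and \cite{AP13}.

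A second, smaller discrepancy: you prove the statement only for classes of negative square, while the proposition is stated for arbitrary configurations of positive codimension in the sense of Definition \ref{sw}; when $g\ge 2$ a class with $A\cdot A\ge 0$ can still have $\cod_A=2(g-1-A\cdot A)>0$, and there both of your key lemmas fail ($\ker D_{N_C}$ need not vanish and representatives need not be unique), so your method gives nothing for such strata. This restriction does cover everything the paper actually uses (the prime subsets are labeled by $\mC\subset\mS^{<0}_u$), but you are proving less than the stated proposition and should say so. One point in your favor: your codimension $\sum_i\bigl(K\cdot[C_i]-[C_i]^2\bigr)$ is the correct general formula and matches Definition \ref{sw}; the proposition's other expression $2N-2c_{1}([C_{1}]+\cdots+[C_{N}])$ agrees with it only when every $C_i$ has genus zero, so your choice of which formula to establish is the right one.
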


\begin{proof}
  First, it suffices to show the above result in the Banach setting, see the convention in the introduction of \cite{AGK09}.  In particular, Theorem 2.1.2 in \cite{IS99} shows that the space of the nodal curve in those fixed classes is a finite codimensional Banach analytic subset.  Using the argument in Appendix B of \cite{AP13},
   at each point $J_0$ one can construct a local chart of $\mA_{u}$ with codimension
   $2N-2c_{1}([C_{1}]+\cdots+ [C_{N}])$, so that any almost complex structure in this chart, has an embedded pseudo-holomorphic curve in each homology class $[C_i]$.

The orbifold structure comes from the Teichmüller space
of the moduli space of Riemann surfaces of genus $g$. Hence the chart is a quotient by at most a finite group.   Note that this follows Lemma 2.6 in \cite{McDacs}.
\end{proof}

\begin{cor}
  The prime subsets $\mA_{u, \mC}$ are suborbifolds of $\mA_u$ having codimension equal to $\cod(\mC)$.
\end{cor}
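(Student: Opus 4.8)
The plan is to read off the Corollary directly from Proposition~\ref{stratum}, since that proposition already carries out the analytic work (the local Banach-analytic charts of \cite{IS99}, \cite{AP13}, and the orbifold structure of \cite{McDacs}). Only two points remain: matching the codimension count of Proposition~\ref{stratum} with $\cod(\mC)$, and identifying the prime subset $\mA_{\w,\mC}$ with an \emph{open} subset of the locus $U_{\mC}$ treated there. Once these are in place, an open subset of a co-oriented Fr\'echet suborbifold is itself a co-oriented suborbifold of the same codimension, and the statement follows.

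For the codimension I would simply substitute the adjunction formula \eqref{AF} into the codimension formula of Proposition~\ref{stratum}. For each $A_i=[C_i]\in\mC$ one has $K_\w\cdot A_i=-A_i\cdot A_i-2+2g_i$, hence $K\cdot[C_i]-[C_i]^2=2(-A_i\cdot A_i-1+g_i)=\cod_{A_i}$. Summing over the classes of $\mC$ converts the codimension $\sum_i\bigl(K\cdot[C_i]-[C_i]^2\bigr)$ of $U_{\mC}$ into $\sum_i\cod_{A_i}=\cod(\mC)$, exactly the quantity in Definition~\ref{fine decomposition}.

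Next I would relate the two subsets. By Definition~\ref{fine decomposition}, $J\in\mA_{\w,\mC}$ precisely when the classes of $\mathcal S_u^{<0}$ carrying an embedded $J$-holomorphic curve are \emph{exactly} those in $\mC$, whereas $U_{\mC}$ only requires the classes of $\mC$ to be represented; thus $\mA_{\w,\mC}\subseteq U_{\mC}$. If $J\in U_{\mC}$ carries an extra embedded curve in some $A\in\mathcal S_u^{<0}\setminus\mC$, then positivity of intersections forces $A\cdot A_i\ge 0$ for all $A_i\in\mC$, so $\mC\cup\{A\}$ is again admissible and $J\in U_{\mC\cup\{A\}}$. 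Hence $\mA_{\w,\mC}=U_{\mC}\setminus\bigcup_{\mC'\supsetneq\mC}U_{\mC'}$, the union running over admissible $\mC'$. To conclude openness I would show this union is closed in $U_{\mC}$ via Gromov compactness: near any fixed $J_0\in\mA_{\w,\mC}$ the $\w$-energy of a potential extra curve is bounded, and a $b^+=1$ manifold carries only finitely many classes of negative square below an energy bound, so along a convergent sequence $J_n\to J_0$ one may fix the extra class $A$; its Gromov limit, having negative square, exhibits by positivity an embedded negative curve, whence a neighbourhood of $J_0$ stays in $\mA_{\w,\mC}$.

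The main obstacle is exactly this last closedness step: a priori the Gromov limit of irreducible curves in a negative class $A$ could be reducible, with all its negative components already lying in $\mC$, which would place the lower stratum $\mA_{\w,\mC}$ in the closure of the higher stratum $\mA_{\w,\mC\cup\{A\}}$ and destroy openness. I expect this to be ruled out by the specific structure of $\mathcal S_u^{<0}$ on a ruled surface: the positive-codimension negative classes are the section-type classes (with $B$-coefficient one), any two distinct such classes intersect negatively and are therefore mutually exclusive, and none can be written as a positive combination of effective classes involving another. Consequently no such degeneration into lower strata can occur, the strata are monotone under specialization, and $\mA_{\w,\mC}$ is genuinely open in $U_{\mC}$, completing the deduction of the Corollary from Proposition~\ref{stratum}.
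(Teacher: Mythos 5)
Your proposal follows the same route as the paper's own proof, which consists precisely of the two observations you make: $\mA_{\w,\mC}\subseteq U_{\mC}$, and the complement of $\mA_{\w,\mC}$ in $U_{\mC}$ is the union of the $U_{\mC'}$ over admissible $\mC'\supsetneq\mC$, so that the suborbifold statement and the codimension count are read off from Proposition~\ref{stratum} (your substitution of the adjunction formula \eqref{AF} into $\sum_i K\cdot[C_i]-[C_i]^2$ is exactly the implicit bookkeeping matching this with $\cod(\mC)$ from Definition~\ref{fine decomposition}). Where you go beyond the paper is in trying to justify that removing the higher strata actually leaves a suborbifold, i.e.\ that $\mA_{\w,\mC}$ is open in $U_{\mC}$; the paper asserts this with a bare ``hence'', so your compactness discussion is a welcome filling-in rather than a deviation.

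One supporting claim in your last paragraph is, however, false as stated: it is not true that no section-type class ``can be written as a positive combination of effective classes involving another''. Such decompositions are exactly the wall-crossing degenerations, e.g.\ $B-kF=(B-(k+1)F)+F$ and $B-kF=(B-kF-E)+E$. What saves the argument is slightly different and splits into two cases. If $\mC$ already contains a section-type class $A_0$, then pairwise negativity of section classes means there is \emph{no} admissible $\mC'\supsetneq\mC$ at all: a $J$ carrying embedded curves in two distinct section classes would violate positivity of intersections, so the complement of $\mA_{\w,\mC}$ in $U_{\mC}$ is empty, $\mA_{\w,\mC}=U_{\mC}$, and no Gromov-compactness argument is needed. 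If instead $\mC$ is the minimal set $\{E,F-E\}$, the degenerations above do occur, but any effective decomposition of a section-type class must contain a section-type component of \emph{negative} square: the $B$-coefficients of the pieces are non-negative and sum to one, and if the unique section-type piece had non-negative square the remaining $B$-coefficient-zero part would have negative $\w$-area, which is impossible. That component is simple (its $B$-coefficient is $1$) and embedded by adjunction, so the Gromov limit lands in a strictly deeper stratum $U_{\mC\cup\{S\}}$, never back in $\mA_{\w,\mC}$. This is the correct content of your phrase ``strata are monotone under specialization''; with the false clause replaced by this argument, your proof is complete and agrees with the paper's.
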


\begin{proof}
$\mA_{u, \mC}$ is a subset of $U_{\mC},$ and the complement of
$\mA_{u, \mC}$ in  $U_{\mC}$  is a union of $U_{\mC_i},$ where the $\mC_i$ are admissible sets containing $\mC$.  Hence $\mA_{u, \mC}$ is a suborbifold of the same codimension as $U_{\mC}$ in $\mA_u.$
\end{proof}

For one-point blowup of ruled surface, as remarked in Section \ref{singlecurve}, each  $\mA_{u, \mC}$ is characterized by the existence of a curve in class $B-kF$ or $B-kF-E, k\ge 0.$  We point out here that for a given $\omega$, all its strata $\mA_{u, \mC}$ are non-empty and mutually disjoint, because we can find complex structures by blowing corresponding minimal ruled surfaces with section classes of self-intersections $2k$ or $2k+1.$

\subsection{Inflation on spaces of almost complex structures}\label{tcinf}

McDuff \cite{McDacs} and later Buse \cite{Buse11} used a version of symplectic inflation keeping track of an almost complex structure $J$ to study the spaces of almost complex structures.
As discussed in \cite{ALLP} Section 4.4,  their combined results amount to the following version of the inflation theorem:

  \begin{thm}\label{ctinf}  For a 4-manifold $M$, given a compatible pair  $(J,\w),$ one can inflate along a $J$-holomorphic curve $Z$, so that there exists a symplectic form $\w'$ taming  $J$ such that $[\w']= [\w]+ t PD(Z), t\in [0,\mu)$ where $\mu= \infty$ if $Z\cdot Z\ge0$ and $\mu= \frac{\w(Z)}{(-Z\cdot Z)}$ if $Z\cdot Z<0$.
\end{thm}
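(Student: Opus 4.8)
The plan is to reduce the theorem to a purely local construction in a tubular neighborhood of $Z$ and then to carry out the standard Thom-form inflation, tracking $J$ throughout. Concretely, it suffices to produce a closed $2$-form $\rho$ with $[\rho]=PD(Z)$, supported in a tubular neighborhood $N$ of $Z$, such that $\w_t:=\w+t\rho$ tames $J$ for every $t$ in the asserted range. Once such a $\rho$ is found, the cohomology class of $\w_t$ is $[\w]+t\,PD(Z)$ by construction, and since $\w_t$ is closed and tames a fixed $J$ it is automatically nondegenerate, hence symplectic. Thus the whole problem is localized near $Z$ and amounts to a pointwise positivity estimate for $\w_t$ on $J$-complex lines.

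For the local model I would identify $N$ with a disk subbundle of the normal bundle $\nu(Z)$, which is a complex line bundle of degree $m:=Z\cdot Z$ once we use $J$ to orient the fibers. Choosing a Hermitian metric and connection, let $\alpha$ denote the global angular $1$-form on $N\setminus Z$ (restricting to $d\theta$ on each fiber) and $F$ the curvature, normalized so that $\int_Z F=2\pi m$. The Thom form $\rho=\tfrac{1}{2\pi}\,d\big(\lambda(r)\,\alpha\big)$, for a suitable radial cutoff $\lambda$ interpolating between the core and the boundary of $N$, is closed, compactly supported in $N$, and Poincaré dual to $Z$. Its key features are: in the fiber directions $\rho$ restricts to a genuine area form, which is $J$-positive and only improves the taming as $t$ grows; while in the directions tangent to $Z$ the contribution of $\rho$ is a multiple of the pulled-back curvature $F$, whose cohomological total over $Z$ is $m$.

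The proof then splits according to the sign of $m$. Writing $v=v^{\mathrm{fib}}+v^{\mathrm{hor}}$ for the decomposition of a $J$-complex tangent vector into fiber and horizontal parts, one evaluates $\w_t(v,Jv)$ and finds that the fiber block is taming for all $t\ge 0$, so all the difficulty lives in the horizontal block along $Z$, which is of the form $\w|_Z+t\cdot(\text{curvature piece})$. If $m\ge 0$ the line bundle is nonnegative and the connection can be chosen so that the curvature piece is itself $J$-nonnegative; then $\w_t$ tames $J$ for every $t\in[0,\infty)$, giving $\mu=\infty$. If $m<0$ the curvature piece is necessarily negative, with total proportional to $m$ over $Z$, and balancing it against $\w|_Z$, whose total is $\w(Z)$, shows that the horizontal block stays positive precisely while $\w(Z)+tm>0$, i.e. while $t<\w(Z)/(-Z\cdot Z)=\mu$. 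At $t=\mu$ the form degenerates along $Z$, which is why the interval is the half-open $[0,\mu)$.

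The main obstacle is the negative-square case, and specifically upgrading the cohomological balance $\w(Z)+tm>0$ to a genuine pointwise inequality with the sharp constant $\mu=\w(Z)/(-Z\cdot Z)$. This forces one to choose both the connection (so that $F$ is as evenly spread as possible) and the radial cutoff $\lambda(r)$ carefully, so that the pointwise negativity of the horizontal contribution is everywhere dominated by $\w|_Z$ up to the stated threshold; this is exactly the delicate part carried out by McDuff \cite{McDacs} and refined, with $J$ kept fixed throughout, by Buse \cite{Buse11}. A final technical point is that $J$ is only compatible, not integrable, so the identification of $N$ with the normal bundle and of $J$ with the fiberwise complex structure holds only to first order along $Z$; since taming is an open and convex condition, the resulting error terms are absorbed by shrinking $N$, and the combined statement is the one recorded in \cite{ALLP}, Section~4.4.
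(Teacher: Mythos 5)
First, a point of comparison: the paper does not actually prove Theorem~\ref{ctinf}. It records the statement as a compendium of known results --- McDuff's inflation \cite{McDacs} for $Z\cdot Z\ge 0$ and Buse's negative inflation \cite{Buse11} for $Z\cdot Z<0$ --- in the packaging given in Section 4.4 of \cite{ALLP}. So the only thing to compare your argument against is those sources, and your outline is indeed the standard one: identify a tubular neighborhood of $Z$ with a disk bundle in the normal bundle (a complex line bundle of degree $m=Z\cdot Z$), take a Thom form $\rho=d\bigl(\lambda(r)\alpha\bigr)$, split tangent vectors into fiber and horizontal parts, and choose the curvature proportional to $\omega|_Z$ so that the horizontal block of $\omega+t\rho$ along $Z$ is $\bigl(1+tm/\omega(Z)\bigr)\,\omega|_Z$; this is exactly what produces the threshold $\mu=\omega(Z)/(-Z\cdot Z)$ when $m<0$ and no threshold when $m\ge 0$. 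It is also the same local-model-plus-quadratic-estimate technique the authors use in their appendix for the harder statement of inflating along a pair of transversally intersecting negative curves.

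There is, however, one step that fails as literally written. You reduce the theorem to producing a \emph{single} closed form $\rho$ with $\omega_t=\omega+t\rho$ taming $J$ for every $t$ in the asserted range, and you dispose of the cross terms (caused by $J$ not preserving the fiber/horizontal splitting away from $Z$) by saying they are ``absorbed by shrinking $N$'' since taming is open. Shrinking $N$ gives a bound of the shape $\rho(v,Jv)\ge -\epsilon(N)\,|v|^2$ with $\epsilon(N)\to 0$, hence $\omega_t(v,Jv)\ge \bigl(c-t\,\epsilon(N)\bigr)|v|^2$; this is positive only for $t$ in a bounded range depending on $N$, so it cannot deliver all $t\in[0,\infty)$ with one fixed $\rho$ when $Z\cdot Z\ge 0$ (and in the negative case the positive margin itself degenerates as $t\to\mu$). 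There are two standard repairs. Either one proves that $\rho$ can be taken genuinely $J$-semi-positive pointwise, $\rho(v,Jv)\ge 0$ for all $v$ --- this is the real content of McDuff's lemma and is not a soft openness argument; for the square-zero fiber class, for instance, it is achieved not by cutoffs but by pulling back an area form on the leaf space under the projection along the $J$-holomorphic foliation. Or one observes that the theorem only asserts, for each fixed $t<\mu$, the existence of \emph{some} taming form in the class $[\omega]+t\,PD(Z)$, so $N$ and $\rho$ may be chosen depending on $t$ (or on a bounded interval $[0,T]$), and then your shrinking argument does close the proof. With either repair your sketch matches the cited proofs.
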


We remark that the bound on the cohomology class of $\omega'$ in the Theorem  \ref{ctinf} is automatically satisfied whenever $\omega'(Z)>0.$
 As further explained in \cite{ALLP}, considering the Li-Zhang Cone Theorem allows us to strengthen the above theorem. More concretely, recall the $J$-tame cone and the $J$-compatible cone:

\[
\mK_J^{t}=\{[\omega]\in H^2(M;\RR)|\hbox{$\omega$  tames
$J$}\},\]
\[\mK_J^{c}=\{[\omega]\in H^2(M;\RR)|\hbox{$\omega$  is compatible with $J$}\}.
\]
  Both $\mK_J^c$ and $\mK_J^t$ are convex cohomology cones, and clearly $\mathcal K_J^c\subset \mathcal K_J^t$.  The following theorem turns this into equality in the case of an almost K\"ahler $J$ on $M^4$ with $b^+=1$.
\begin{thm}[Theorem 1.3 in \cite{LZ09}]\label{ct}
Let $(M,J)$ be an almost complex 4-manifold.  If $b^+(M)=1$ and $\mK_J^c\ne \emptyset$, then $\mK_J^c=\mK_J^t$.
\end{thm}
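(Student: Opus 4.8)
The inclusion $\mK_J^c\subseteq\mK_J^t$ is immediate, so the whole content is the reverse inclusion: every class tamed by $J$ must already contain a $J$-compatible form. My plan is to fix a reference $\omega_0\in\mK_J^c$ (nonempty by hypothesis), use the induced almost K\"ahler metric $g_0=\omega_0(\cdot,J\cdot)$ to run Hodge theory, and exploit the hypothesis $b^+=1$ to control the failure of a tamed form to be compatible. Throughout I decompose $2$-forms pointwise into their $J$-invariant and $J$-anti-invariant parts, $\alpha=\alpha^++\alpha^-$, and recall the standard fact that in real dimension four the anti-invariant forms $\Omega_J^-$ are self-dual, while the primitive invariant forms are anti-self-dual and $\omega_0$ itself is self-dual.

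The first step is the Hodge-theoretic input forced by $b^+=1$. Since $\omega_0$ is closed and $*\omega_0=\omega_0$, it is $g_0$-harmonic, and as $b^+=1$ it spans the one-dimensional space of self-dual harmonic forms. Because every $J$-anti-invariant $2$-form is self-dual, a $J$-anti-invariant harmonic form would lie in $\RR\omega_0$; but $\omega_0$ is $J$-invariant, so the only such form is $0$. Equivalently every harmonic $2$-form is $J$-invariant, i.e. $h_J^-=0$. I will record this in the operational form I actually need: the first-order operator $\beta\mapsto(d\beta)^-$ from $1$-forms to $\Omega_J^-$ has dense and (by the elliptic estimate for $d+d^*$) closed range, hence is surjective, since its cokernel is the space of coclosed anti-invariant forms, which are self-dual and closed, hence harmonic and therefore zero.

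The second step uses this surjectivity to trade taming for compatibility at the level of cohomology. Given $e\in\mK_J^t$ with taming representative $\omega$, its invariant part $\omega^+$ is a positive $(1,1)$-form, since $\omega^+(v,Jv)=\omega(v,Jv)>0$, though it need not be closed; the discrepancy is carried entirely by the anti-invariant part $\omega^-$. By the surjectivity above I solve $(d\beta)^-=-\omega^-$, and then $\tau:=\omega+d\beta$ is closed, lies in the class $e$, and is $J$-invariant. As a closed $J$-invariant positive form is exactly a compatible form, it remains only to arrange positivity of $\tau$.

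The positivity is the crux and the main obstacle: closing up replaces $\omega^+$ by $\omega^++(d\beta)^+$, and there is no pointwise control of the correction $(d\beta)^+$ by the positivity margin of $\omega^+$, so a single tamed representative need not close up to a positive form. I would resolve this globally rather than pointwise, by a continuity argument on the cones themselves. Both $\mK_J^c$ and $\mK_J^t$ are open convex cones, so $\mK_J^t$ is connected, and $\mK_J^c$ is open in it (compatibility is an open condition, with Moser stability). The plan is then to prove that $\mK_J^c$ is also \emph{closed} in $\mK_J^t$: along a path in $\mK_J^t$ approaching the boundary of $\mK_J^c$, the $b^+=1$ self-dual Hodge theory supplies uniform a priori control on the $J$-invariant representatives (the self-dual harmonic part is pinned to $\lambda\omega_0$ with $\lambda=e\cdot[\omega_0]/[\omega_0]^2>0$ for any tamed class) needed to prevent the compatible forms from degenerating, so the limiting class again admits a compatible form. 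Openness, closedness, and connectedness then force $\mK_J^c=\mK_J^t$. I expect this closedness/a priori estimate step, equivalently the uniform positivity as the class drifts to the boundary, to be the genuinely hard part, with the earlier two steps amounting to linear algebra plus standard elliptic theory.
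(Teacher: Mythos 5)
This statement is not proved in the paper at all: it is imported verbatim from Li--Zhang \cite{LZ09} (their Theorem~1.3), where it is obtained from the structure theorem $\mK_J^t=\mK_J^c+H_J^-$ for compact almost K\"ahler $4$-manifolds together with the vanishing of $H_J^-$ when $b^+=1$. Measured against that argument, your Steps 1 and 2 are correct, and they capture exactly the place where $b^+=1$ enters: anti-invariant forms are self-dual for $g_0=\omega_0(\cdot,J\cdot)$, a closed (equivalently coclosed) anti-invariant form is therefore self-dual harmonic, hence a constant multiple of $\omega_0$, hence zero by pointwise orthogonality to $\omega_0$; consequently every $g_0$-harmonic $2$-form is $J$-invariant, and $\beta\mapsto(d\beta)^-$ is surjective because $\Omega_J^-$ is pointwise orthogonal to $\mH^+=\RR\omega_0$ and so lies in the image of $d^+$. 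But notice what these two steps actually deliver: every class in $H^2(M;\RR)$ -- tamed or not -- acquires a closed $J$-invariant representative, since you only ever use the taming form as some closed representative. So the entire content of the theorem sits in your Step 3.

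Step 3 is a genuine gap, not a deferred technicality. Your plan is to show $\mK_J^c$ is closed in $\mK_J^t$ and conclude by connectedness, with closedness coming from ``uniform a priori control'' whose only concrete ingredient is that the self-dual harmonic part of a tamed class equals $\lambda\omega_0$ with $\lambda=e\cdot[\omega_0]/[\omega_0]^2>0$. This cannot work. First, that constraint is purely cohomological: it pins an $L^2$-projection and gives no $C^0$ control, while positivity is a pointwise condition on representatives $\tau_n\in e_n$ that satisfy no elliptic equation and are unconstrained modulo exact forms, so nothing prevents them from degenerating as $e_n$ converges. Second, and decisively, the input $\lambda>0$ holds for \emph{every} class in the half-space cone $\{e : e\cdot[\omega_0]>0\}$, tamed or not; this cone is open, convex, connected, and contains $\mK_J^c$ as an open subset (openness again uses that harmonic forms are invariant). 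If your closedness mechanism were valid with only that input, it would force $\mK_J^c$ to equal this whole half-space cone, which is false whenever $b_2\geq 2$ -- in particular for the ruled surfaces of this paper -- since that cone contains classes of non-positive square (e.g.\ the fiber class $F$), and no such class contains any symplectic form, let alone a compatible one. Any correct proof of closedness must therefore use tameness of the limiting class in an essential, pointwise way, and your proposal does not say how. This is precisely the hard step: in \cite{LZ09} it is not handled by a continuity method but by the global argument proving $\mK_J^t=\mK_J^c+H_J^-$, of which your Steps 1--2 recover only the easy reduction $H_J^-=0$.
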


For 4-dimensional manifolds $M$ with $b^+=1,$ this further allows us to enhance  the above theorem \ref{ctinf} to the {\bf $b^+=1$ $J$-compatible inflation theorem \ref{ccinf}}.  This in particular works for any blow-up of ruled surfaces.

  \begin{thm}\label{ccinf}  For a 4-manifold $M$ with $b^+=1,$ given a compatible pair  $(J,\w),$ one can inflate along a $J$-holomorphic curve $Z$, so that there exist a symplectic form $\w'$ compatible with  $J$  such that $[\w']= [\w]+ t PD(Z),t\in [0,\mu)$ whenever $\w'(Z)>0$  or, equivalently, $\mu= \infty$ if $Z\cdot Z\ge0$ and $\mu= \frac{\w(Z)}{(-Z\cdot Z)}$ if $Z\cdot Z<0$.
\end{thm}

  \subsection{ Keeping track of isotopy classes during the $J$-inflation.}\label{b+1inf}

Cohomologous forms are not known to be isotopic in the cases of blow-ups, in contrast to the minimal case. Recall that  $\mT_u$ is the space of symplectic forms in the cohomology class $u$ and  $\mT_{\w}$ is the space of symplectic forms isotopic to $\w$. When $u=[\w]$, $\mT_{\w}$ is a path connected component of  $\mT_u$.   \\

When performing $J$-compatible inflation, where $J$ is in the larger space $\mA_u$ (of almost complex structures compatible with some symplectic form in the cohomology class $u$), we must exhibit the following properties:

\begin{itemize}
  \item {\em All the path-connected components of both $\mA_{{u}}$ and  $\mT_{u}$ are homotopic to each other and there is a canonical bijection between the two sets of components}.

  By the  argument  in Lemma 4.1 in \cite{ALLP},  $\mA_{{\w}}$  is a path connected component of $\mA_u$ and is canonically homotopy equivalent to  $\mT_{\w}$.

In fact, $\mT_{\w}$ and $\mA_{ \w}$ correspond to each other under the
   canonical bijection between the sets of path connected components of $\mT_{u}$ and $\mA_{u}$ in Lemma 4.1 in \cite{ALLP}.

  \item {\em Performing inflations from one cohomology class to another preserves the path connected components of the spaces $\mA_{{u}}$ and  $\mT_{u}$.  }

  Let $J\in \mA_{ \w}$ such that $[\w]=u$.  Suppose that we perform the $b^+=1$ $J$-compatible inflation in Section \ref{tcinf} to $\omega$ to first get a symplectic form $\w'$ taming $J$.  Then by the cone results in \cite{LZ11},  one can conclude that $J$ is compatible with some symplectic form $\w''$ in the same cohomology class  $u'$ of $\w'$.
\end{itemize}

\begin{Claim}{\ }

{\renewcommand{\labelenumi}{{\em\roman{enumi})}}
  \begin{enumerate}
    \item For another $\tilde{J} \in \mA_{ \w}$, after performing the two steps in the $b^+=1$ $\tilde{J}$-compatible inflation we obtain a symplectic form
    $\tilde{\w''}$ in the same path connected component of $\mT_{u'}$ as $\w''$. \\
    \item Performing the two step $b^+=1$ inflation in the opposite direction, for any other $J'' \in \mA_{ \w''} $, produces an $\omega''' \in \mT_{ \w}$.
  \end{enumerate} }
\end{Claim}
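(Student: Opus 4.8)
The plan is to deduce both statements from the canonical bijection $\pi_0(\mA_{u'})\cong\pi_0(\mT_{u'})$ of Lemma 4.1 in \cite{ALLP}, whose defining feature is that the component $\mA_{\omega''}$ corresponds to the isotopy component $\mT_{\omega''}$ in the strong sense that \emph{every} symplectic form compatible with an almost complex structure lying in $\mA_{\omega''}$ already lies in $\mT_{\omega''}$ (and symmetrically for $\pi_0(\mA_u)\cong\pi_0(\mT_u)$). Granting this, part (i) reduces to the single assertion that $J$ and $\tilde J$ lie in the \emph{same} component of $\mA_{u'}$, and part (ii) reduces to showing that each $J''\in\mA_{\omega''}$ lies in the component $\mA_\omega$ of $\mA_u$.

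For (i), since $\mA_\omega$ is path connected (again Lemma 4.1 in \cite{ALLP}) I would fix a path $\{J_s\}_{s\in[0,1]}$ from $J$ to $\tilde J$ inside $\mA_\omega$. For each $s$ the class $[Z]\in\mS^{<0}_u$ admits a (possibly nodal) $J_s$-holomorphic representative, so Theorem \ref{ccinf} applied to $J_s$ yields a form taming $J_s$ in the class $u'=u+t\,PD(Z)$; the inflation amount may be held at a fixed $t<\mu=\omega(Z)/(-Z\cdot Z)$ because $\mu$ is cohomological and hence constant on $\mA_\omega$. By the cone theorem \ref{ct} this gives $u'\in\mK_{J_s}^t=\mK_{J_s}^c$, so $J_s\in\mA_{u'}$ for every $s$, and therefore $\{J_s\}$ is a path inside $\mA_{u'}$. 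As $J=J_0$ is compatible with $\omega''$ we have $J\in\mA_{\omega''}$, whence $\tilde J=J_1\in\mA_{\omega''}$ as well; since $\tilde{\omega''}$ is compatible with $\tilde J$, the bijection forces $\tilde{\omega''}\in\mT_{\omega''}$.

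For (ii) I would run the mirror argument along a path $\{J''_s\}$ in the path connected set $\mA_{\omega''}$ joining the distinguished $J$ to an arbitrary $J''$. The reverse two-step inflation is inflation along the negative class of the $u'$-chamber whose Poincar\'e dual points back toward $u$, followed by the cone theorem \ref{ct}; for each $s$ this produces a $J''_s$-compatible form in class $u$, so $u\in\mK_{J''_s}^c$ and $\{J''_s\}$ is a path inside $\mA_u$. The endpoint $J$ is compatible with $\omega$ itself, so $J\in\mA_\omega$; propagating along the path gives $J''\in\mA_\omega$, and hence the output $\omega'''$, being compatible with $J''\in\mA_\omega$, lies in $\mT_\omega$. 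On the test structure $J$ one may in fact bypass inflation altogether: $u,u'\in\mK_J^c$ and $\mK_J^c$ is convex, so a segment of $J$-compatible forms joins $\omega''$ to a $J$-compatible form in class $u$, which is isotopic to $\omega$ because the $J$-compatible forms in a fixed class are convex.

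The hard part will be the family step, i.e.\ exactly item (b) of the techniques overview: one must know that the negative-square class $[Z]$ — and, for (ii), the curve class of the $u'$-chamber driving the reverse inflation — carries an embedded or nodal $J$-holomorphic representative for \emph{every}, possibly non-generic, $J$ in the relevant component, and that Theorem \ref{ccinf} can be applied uniformly across the nodal degenerations encountered along the path. This is precisely the Seiberg--Witten/prime-stratum input recorded in Section \ref{b+1inf}: membership in $\mS^{<0}_u$ is detected by positive pairing with $\omega$, so existence persists throughout a single component, and the parametrized inflation of \cite{McDacs, Buse11} is engineered to absorb the degenerations. The only remaining subtlety is the cohomological bookkeeping that pins the class at $u'$ (resp.\ $u$) all along the family.
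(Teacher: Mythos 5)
Your reduction of both parts to a statement about components of $\mA_{u'}$ (resp. $\mA_u$), together with the convexity-based ``strong'' form of the ALLP bijection, is sound. The genuine gap is the transport step: you connect $J$ to $\tilde J$ by a path $\{J_s\}$ in $\mA_{\w}$ and assert that \emph{every} $J_s$ admits a (possibly nodal) $J_s$-holomorphic representative of $[Z]$, so that Theorem \ref{ccinf} plus Theorem \ref{ct} puts the whole path inside $\mA_{u'}$. This is false in general, and it is false precisely because of the stratification the paper is built on (Definition \ref{fine decomposition}, Proposition \ref{stratum}): negative-square classes are represented only on positive-codimension prime strata, and a path joining $J$ to $\tilde J$ inside $\mA_{\w}$ will generically run through the open stratum. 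There a class such as $B-kF$ has no embedded representative, and no nodal one either: its Gromov--Taubes index is $1-g-2k<0$, and any nodal degeneration would need a section-type component $B+xF+yE$ with $x\le -k$, of even more negative index, so for generic $J_s$ nothing exists. Your fallback justification misreads the input quoted in Section 2: the theorem of \cite{LU06} produces an embedded \emph{symplectic} representative, i.e.\ a $J$-holomorphic curve for \emph{some} compatible $J$, not for every $J$ in a connected component --- if existence persisted across a component, every prime stratum would equal $\mA_u$ and the stratification would collapse. Note also that the conditional reading of the Claim does not rescue the argument: the Claim only presupposes that the inflation can be performed at the endpoints $J$, $\tilde J$ (resp.\ $J''$), whereas your proof needs it at every intermediate point. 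Your argument is correct exactly when $Z$ is one of the classes represented for all $J$ --- $F$, $E$, $F-E$, via Zhang's foliation (Proposition \ref{curvesexist}) --- i.e.\ for rightward and vertical moves, but not for the section and negative classes $B+xF$, $B-kF$, $B-kF-E$ that drive the leftward inflations of Lemmas \ref{vertinf}, \ref{incdown}, \ref{pcod}, which are the cases the Claim is actually needed for.

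The paper's proof avoids transporting curve existence along paths altogether. It anchors all components at the single integrable structure $J_{{\rm std}}$: since $J_{{\rm std}}$ is K\"ahler, it is compatible with some form in \emph{every} class of the cone (blow-up forms, inflation along the $J_{{\rm std}}$-holomorphic fiber class, then Theorem \ref{ct}), and this pins down a canonical component of $\mA_u$ and of $\mT_u$ for all $u$ simultaneously; that is the whole proof of claim i). Claim ii) is then a connectivity argument for the product space $P_{\w''}=\{(\w,J)\in \mT_u\times \mA_{\w''}\ |\ \w \text{ compatible with } J\}$, whose projections have convex, hence contractible, fibers, so connectedness of $\mA_{\w''}$ forces the output $\w'''$ into the component $\mT_{\w}$. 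Here convexity of the spaces of compatible forms replaces your path-transport of holomorphic curves, and the only curve-existence input occurs at the fixed K\"ahler structure $J_{{\rm std}}$, where it is automatic. If you wanted to salvage your route, you would need path-connectedness of the individual strata $\mA_{u,\mC}$ (so the path could be kept where the curve exists), which is not established in the paper and is a substantially harder statement.
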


We conclude that the inflation process does not change path connected components.

\begin{proof}

 First, let us choose the path connected component explicitly.
 The proof for the
 Claim i) and ii) for any other components follow from the first bullet in this section.

Let $J_{{\rm split}}$ be the  product  complex structure on $\Sigma_g\times \CC P^1$, pick a point $p_0\times 0$ on it  and consider its standard blow up inducing a holomorphic Lefschetz fibration   $\pi _{hol}: M_g \# \overline{ \CC P^2}\to (\Sigma_g,p_0)$.
Let $J_{{\rm std}}$ be the {\em holomorphic blow-up of $J_{{\rm split}}$ at $p_0\times 0$.}

As we will see below, for any cohomology class $u$, there is a preferred path connected component  ( denoted  $\mA_{ \w}$) of $\mA_u$ that contains $J_{{\rm std}}$. We choose all other $\mA_{ \w'}$ or $\mT_{ \w'}$ accordingly.

Namely,  for any $u$, there exists a symplectic form $ \bar{\w }$ with cohomology class $u$ so that  $J_{{\rm std}}$ is compatible with some form in the isotopy class, namely, $J \in \mA_{\bar{\w}}.$  The reason is we can always start from a K\"ahler ruled surface and inflate along the embedded fiber class curve to achieve any cohomology class in figure \ref{cone}. Hence the standard $J_{{\rm std}}$ tames some form in every class. Then, the comparison of the tame and compatible cones by Li-Zhang shows that $J_{{\rm std}}$ is compatible with some form in every class.
 Notice that this gives the canonical choice of the path connected component of $\mA_u$ for any $u$.  This proves claim i).

 To prove claim ii),   we use the canonical choice of path components given by $J_{{\rm std}}$:  Recall that  both $J_{{\rm std}}$ and $J''$ belong to the same path connected component $\mA_{ \w''}$. Define the product space $ P_{\w''} = \{(\w, J)\in \mT_u \times \mA_{ \w''} \quad | \quad  \w $  is compatible with $ J, [\w]=u\}.$  Consider the projection from $P_{\w''}$ to $\mA_{\w''}$, which amounts to taking a path connected component of $\mT_u$.

Notice that by section 3.5  \cite{MS17}, both the projection onto the $J$ and $\w$ factors have convex and hence contractible fibers. The space $\mA_{ \w''}$ is path connected.  Then the product space $P_{w''}$ is also path connected. Hence we know that the form   $\w'''$ described in the statement of claim ii) lives in the same path-connected component  as the original $\w.$ \\

\end{proof}

Now, that we can identify  a canonical path connected component containing $J_{{\rm std}}$ inside the space  $\mA_{u}$ and its corresponding path component inside  $\mT_{u}$ {\it we will abuse notation from here on and refer to  $\mA_{u}$  and  $\mT_{u}$  as being these path connected spaces.}

Note that such components are preserved in the following diagram that will be used later on during the of  proof of  Theorem \ref{highgenus}

  \begin{equation}
\begin{CD}
G_u @>>> \Diff_0(M)@>>> \mA_{ u}\\
@VVV @VVV @VVV \\
G_{u'} @>>>  \Diff_{0}(M)   @>>>  \mA_{ u'}.\\
\end{CD}
\end{equation}

\subsection{A historical detour to the minimal cases}\label{mininf}
It is informative to explain the proof strategy from McDuff \cite{McDacs} and Buse \cite{Buse11} in the minimal cases, to show how we can evolve our strategy in the non-minimal cases.

When the symplectic manifold is a minimal ruled surface the normalized symplectic cone is given by a line. The finite codimension strata are given by curve classes $B-kF$; these are labeled $\mA_{g,\mu} ^k $ and $ \mA_{g,\mu} ^{{\rm open}} $ respectively. McDuff showed that for each $J$ {\em there is a foliation of $M_g$ whose leaves are embedded $J$-holomorphic curves $F$}. She used this to show that $\mA_{g, \mu} \subset \mA_{g,\mu + \eps}$ for all $\mu, \eps > 0$, $g >0$, or for all $ \mu>1$, $\eps > 0$, $g =0$.

In her work,  this {\em right inclusion} is done regardless of strata; however, left inclusions ought to be proved stratum by stratum (including the open stratum) using inflation methods along embedded curves with positive self-intersections.

More specifically, she used curves in the section class $B$ (with a particular restriction on $\mu> \lfloor \frac{g}{2} \rfloor$ for the higher genus cases) to prove the left inclusion  $\mA_{g,\mu} ^{{\rm open}}\supset \mA_{g,\mu + \eps}^{{\rm open}}$.

The existence of sufficiently many embedded positive curves in the strata $ \mA_{g,\mu} ^k $ proved difficult in McDuff's work in the cases $g>0$, although possible in the rational cases. Later, Buse developed the inflation technique along negative curves in order to establish the left inclusions for strata with finite codimension. She completed the inclusions $\mA_{g,\mu} ^{k}\supset \mA_{g,\mu + \eps}^{k}$ for all $k>1$ and all $g>0.$

As a byproduct of these techniques and the homotopy fibration \eqref{fibacs}, Conjecture \ref{stabconj} holds for the following cases.

%\begin{thm} {\ }
\begin{itemize}
    \item (McDuff\cite{McDacs}) When $g=0$, the homotopy type of the groups $G^0_{\mu}$ is unchanged on all intervals $(n, n+1], n$ for any nonzero natural number $n$.

    \item( Buse \cite{Buse11}) When $g>0$,  the homotopy type of the groups $G^g_{\mu}$ is unchanged on all intervals $(n, n+1]$ for any nonzero natural number $   n  \geq \lfloor g/2 \rfloor$.
\end{itemize}
%\end{thm}

Moreover, these stability results allowed McDuff to show that a homotopy colimit $G_{\infty, g}$ exists and to show that this group is homotopy equivalent to a smooth model group, namely the group $\mathcal{D}^0_g$ of smooth fiberwise diffeomorphisms of $M_g$. Among the topological consequences of this result we note that  $\mathcal{D}^0_g$ and $G_{\infty, g}$ (and consequently $G_{g,\mu}$ when  $   \mu  \geq \lfloor g/2 \rfloor$) {\em are connected.}

\section{ Stability of strata of $\mA_{u}$ in the one-point blow-up cases}\label{stabproof}

 We use a similar strategy for the non-minimal case. However, both the analysis of the curves and the inflation methods are more intricate here.

 In Section \ref{curves} we explain all the results yielding embedded curves in the one point blow up while in Section \ref{inflation} we detail all the inflation moves in the range $\mu >g$, both across the symplectic cone and within chambers needed to establish a stability result for the strata of almost complex structures.

Let us point out that unless necessary we will abuse the notation by omitting the genus subscript in our spaces and the blow-up number $n$ (which always equals 1 here).For example we will write $\mA_{u}$ instead of $\mA^n_{g, u}.$

%First, we use the Lefschetz fibration result of \cite{Zhang16} and proceed with the right inflation of the total (unstratified)  spaces of compatible almost complex structures. Then we produce additional curves to deal with inflation {\it within the chambers}. Also, notably,  a similar restriction on the symplectic class with regards to  the $g$ appears (due to constraints arising from Gromov invariants computations) when dealing with leftwards inflation on the open stratum. The general cases for several-point blow-ups will be treated in future work \cite{BL2}; This will be used both with the right inflation and the chamber positioning, which in these cases involves increasing or decreasing the blow-up sizes (vertical inflations). Our Proposition \ref{inflation} and the table \ref{inftable}  in section \ref{stabproof} illustrate the strategy.

We conclude with Section that provides the proof of the main stability Theorem \ref{highgenus}.
%We are also able to show the existence of a homotopy colimit and provide a smooth diffeomorphism model in Section \ref{s:out}.  However the topology of this group is more difficult to study so we content ourselves to show that this smooth model and, consequently the symplectomorphism groups, {\em are not connected}, in contrast to the minimal case.

\subsection {Curves}\label{curves}
 The crux of the inflation arguments rely on the existence of embedded $J$ holomorphic curves in sufficient many homology classes, for any $J$. As the present paper only treats the one-point blow-up cases for any $g>0$; we lay out here the necessary existence results largely based on \cite{Zhang16}.

 %(see also Shevshishin-Smirnov \cite{SS17elliptic}) translates into the following:

%First let us recall the following Lemma in %\cite{Zhang16} and also used in %\cite{SS17elliptic}.

\begin{lma}\label{lma-ruled2}

 Let $(M,\omega)$ be a symplectic irrational ruled 4-manifold diffeomorphic to $M_g\# \overline{\CC P}^2$, and let $J$ be an $\omega$-compatible almost-complex structure. Then $M$ admits a {\bf Lefschetz fibration} given by a proper projection $\pi : M \to Y $ where Y is a smooth compact surface of  genus $g$ such that

  i) there is a singular value $y^{*} \in Y$ such that $\pi$ is a fibration over the space $Y  - y^{*}$,  with the fiber $\pi^{-1}(y)$, $y \in Y  - y^{*}$, represented by an embedded $J$-holomorphic  rational  curve in the class $F$;

  ii) the fiber $\pi^{-1}(y^{*})$ consists of the two exceptional $J$-holomorphic smooth rational curves in the classes $F-E$ and $E$.
\end{lma}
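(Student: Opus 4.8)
The plan is to construct the singular foliation directly from the fiber class $F$, following McDuff's foliation argument for minimal ruled surfaces \cite{McDacs} and its blow-up refinement due to Zhang \cite{Zhang16}. I would first record the numerics of $F$. Since $F$ is represented by an embedded sphere with $F\cdot F=0$, the adjunction formula \eqref{AF} gives $K_\w\cdot F=-2$, so the unparametrized moduli space of genus-zero $J$-holomorphic curves in class $F$ has expected real dimension $2(-K_\w\cdot F-1)=2$, matching the real dimension of the leaf space $Y$ we are after. Moreover the normal bundle of an embedded $F$-sphere has degree $F\cdot F=0>2g(F)-2=-2$, so the Hofer--Lizan--Sikorav automatic transversality criterion applies and every embedded $J$-holomorphic $F$-curve is regular; this is exactly what lets us dispense with any genericity hypothesis on $J$.

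Next I would establish existence, disjointness, and coverage. The class $F$ has nonzero Gromov--Taubes invariant, so a $J$-holomorphic curve in class $F$ passes through a generic point of $Z$; combined with Gromov compactness and a surjectivity argument for the evaluation map out of the (regular, hence smooth $2$-dimensional) moduli space, this shows that every point of $Z$ lies either on a smooth embedded $F$-curve or on a limiting cusp configuration. Two distinct irreducible $F$-curves have intersection number $F\cdot F=0$, so by positivity of intersections they are disjoint; hence the smooth $F$-curves foliate the open set they cover, their leaf space is a smooth surface $Y$, and the leaf map extends to a proper projection $\pi:Z\to Y$ (properness being automatic since $Z$ is compact with compact fibers).

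It then remains to identify the degenerate leaves. Any bubbling of an $F$-curve yields a connected nodal configuration whose components lie in classes of $\mathcal{S}_\w$ with positive $\w$-area summing to $F$. Because $\w(F)=1$, $\w(E)=c$ and $0<c<1$, the only such splitting is $F=(F-E)+E$ into the two exceptional classes. Each of $E$ and $F-E$ is an exceptional class and hence admits a unique embedded $J$-holomorphic representative $C_E$ and $C_{F-E}$; these meet in exactly $(F-E)\cdot E=1$ point, so $C_E\cup C_{F-E}$ is a nodal curve in class $F$. Since $E\cdot F=0$, the curve $C_E$ is disjoint from every smooth $F$-fiber and therefore cannot itself be a leaf; it must sit over the singular locus, which forces exactly one degenerate leaf $\pi^{-1}(y^*)=C_E\cup C_{F-E}$, while uniqueness of the exceptional representatives rules out any other singular fiber. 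This yields conclusions (i) and (ii). Finally, since the generic leaves are spheres and $b_1(Z)=2g$, the closed surface $Y$ satisfies $b_1(Y)=2g$ and hence has genus $g$.

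The hard part is the passage from the generic existence statement to the ``through every point'' statement for an arbitrary, possibly non-generic, compatible $J$, together with the exclusion of more complicated degenerations of the $F$-curves. This is precisely the content of the foliation theorem of \cite{Zhang16} (see also \cite{SS17elliptic}), and the argument above is essentially a translation of that result to the present normalization; automatic transversality for $F$ and the uniqueness of the representatives of $E$ and $F-E$ are the two facts that make the non-generic case tractable.
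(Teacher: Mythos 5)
Your proposal is correct and takes essentially the same route as the paper: the paper gives no independent proof of this lemma, stating it as a direct translation of Zhang's foliation theorem (Theorem 1.6 of \cite{Zhang16}) to the one-point blow-up setting, which is exactly where your sketch also places the real weight (the ``through every point'' statement for non-generic $J$ and the exclusion of degenerations). One caveat worth recording: your area-based claim that $F=(F-E)+E$ is the only possible splitting is insufficient as stated, since for $\w(E)=c<1/2$ the area condition alone also permits $F=(F-2E)+2E$ and similar configurations, and one must invoke the adjunction inequality (no simple sphere exists in class $F-kE$ or $kE$ for $k\ge 2$) together with an analysis of multiple covers of the $E$-curve to rule these out --- but since you defer the full degeneration analysis to \cite{Zhang16} anyway, this does not affect the logical structure of your argument.
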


%\begin{rmk}Smirnoff \end{rmk}
The main result in this subsection is Proposition \ref{inflation}.  To prove it, we  first need to show the existence of embedded $J$-holomorphic curves.

 \begin{prp}\label{curvesexist} Compendium of $J$-holomorphic curves on  $M=M_g \# \overline{\CC P}^2$.
\begin{enumerate}

\item For any $J$  $\in \mA_u$,  $M$ admits a Lefschetz fibration structure,  therefore, there are embedded $J$-holomorphic curves in the classes $F$, $F-E$ and $E$ by \cite{Zhang16}.

\item  For any $J$ in a positive codimensional stratum $\mA_{u, C}$, where $C$ is either $B-kF$ or $B-kF-E$, the class $C$ is guaranteed to have an embedded $J$-holomorphic curve representative by the very definition of the stratum.

\item  For any $J$ in $\mA_{u, {\rm open}},$  there is an embedded $J$-holomorphic curve in some class $B+xF$ or $B+xF-E$ for $0\le x\le g$.

\end{enumerate}
\end{prp}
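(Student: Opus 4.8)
The first two assertions are immediate. Part (1) is exactly the content of the singular foliation Lemma \ref{lma-ruled2}: the regular leaves are embedded $J$-holomorphic curves in class $F$, while the singular fibre over $y^{*}$ splits as the two exceptional spheres in classes $F-E$ and $E$. Part (2) is just the definition of a prime stratum: by Definition \ref{fine decomposition}, $J\in\mA_{u,\mC}$ means precisely that a class of $\mS^{<0}_u$ is $J$-holomorphically represented if and only if it lies in $\mC$, so taking $C\in\{B-kF,\,B-kF-E\}\subset\mC$ gives the embedded representative. Hence the whole content is part (3), which I treat next.

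The plan for (3) is to produce a $J$-holomorphic \emph{section} of the ruling, i.e.\ an embedded curve meeting the generic fibre $F$ once; since any class $A$ with $A\cdot F=1$ and no $E$-component has the form $B+xF$, it then remains only to bound $x\le g$. First I would settle the generic case by a Gromov-invariant count. Using the adjunction formula and $K=(2g-2)F-2B+E$, I compute that $B+xF$ has genus $g$ and that its unparametrised moduli space has expected dimension $d(B+xF)=\tfrac12\big((B+xF)^2-K\cdot(B+xF)\big)=2x-g+1$, so $d\ge0$ as soon as $x\ge\tfrac{g-1}{2}$. I would then invoke Taubes' Seiberg--Witten--Gromov correspondence together with the blow-up formula to conclude that the Gromov invariant $\Gr(B+x_0F)$ is non-zero for a suitable $x_0\le g$ in the standing chamber $\mu>g$; this yields an embedded $J$-holomorphic representative of $B+x_0F$ for every \emph{generic} $J\in\mA_{u,{\rm open}}$.

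To upgrade this to \emph{all} $J$ in the open stratum I would run a Gromov-compactness argument. Choosing generic $J_n\to J$ (possible since the open stratum is open and dense), taking embedded representatives $C_n$ of $B+x_0F$, and passing to a limit stable map $C_\infty$ in the same class, every irreducible component is one of the curves available for $J$: a section-type component $S$ with $S\cdot F=1$, together with fibre/exceptional bubbles in the classes $F$, $F-E$, $E$ supplied by part (1). Peeling off the latter only lowers the fibre coefficient, so the surviving section component lies in a class $B+x'F-mE$ with $x'\le x_0\le g$. The open-stratum hypothesis, which makes no $B-kF$ or $B-kF-E$ curve $J$-holomorphic, together with the area bound $\mu>g$ and $\w(S)\le\w(B+x_0F)=\mu+x_0$, should force $m=0$ and $x'\ge0$, producing an embedded representative of some $B+x'F$ with $0\le x'\le g$.

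The main obstacle is precisely this last limit analysis: controlling which configurations can appear in $C_\infty$, showing that after removing the fibre and exceptional bubbles the section component is embedded (not multiply covered) and carries no exceptional class (so that its class is genuinely $B+x'F$) with $x'$ in the required range, and verifying that the open-stratum condition together with $\mu>g$ really do exclude every degeneration into the forbidden classes $B-kF,\ B-kF-E$ as well as into multiple-section limits. The Gromov-invariant input---non-vanishing of $\Gr(B+x_0F)$ with the sharp bound $x_0\le g$ valid in the chamber $\mu>g$---is the other delicate point, and I would isolate it as a separate computation built on the known invariants of the minimal ruled surface $\Sigma_g\times S^2$ and the blow-up formula.
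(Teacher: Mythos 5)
Your treatment of (1) and (2) matches the paper's, and for (3) your overall route --- non-vanishing of a Gromov invariant for a section class, then Gromov compactness and an open-stratum analysis of the limit for an arbitrary $J$ --- is exactly the paper's route as well, so the comparison reduces to the two points you deferred. The first one is not actually delicate: the paper does not run a separate Seiberg--Witten/blow-up-formula computation, it simply quotes Li--Liu \cite{LL01}, whose formula $\Gr(pB+qF)=(p+1)^g$ (valid for $\Sigma_g\times S^2$ \emph{and} its blow-ups, provided the Taubes dimension is non-negative) is applied to the single class $B+gF$, for which $k(B+gF)=\tfrac12\bigl(-K\cdot(B+gF)+(B+gF)^2\bigr)\ge 0$. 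Moreover your worry that the non-vanishing must be ``valid in the chamber $\mu>g$'' is misplaced: $\Gr$ is a smooth invariant, independent of the symplectic form, and indeed no hypothesis $\mu>g$ appears in part (3); the bound $x\le g$ is what feeds into the later lemmas (Lemmas \ref{vertinf}, \ref{incdown}), where $\mu>g$ is used.

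Your ``main obstacle,'' on the other hand, is precisely the step the paper dispatches most tersely, and your instinct that it is the crux is correct. In the limit, every component with $A\cdot F=0$ is a cover of $F$, $E$ or $F-E$ (by the foliation and positivity of intersections), and the unique component with $A\cdot F=1$ --- automatically simple, since its $B$-coefficient is $1$ --- has class $B+xF-mE$ with $x\le g$ because the remaining components have non-negative $F$-coefficients; positivity of intersections with the ever-present $E$- and $(F-E)$-curves of part (1) then pins $m\in\{0,1\}$. The paper concludes at this point that the section is $B+xF$, i.e.\ $m=0$, from the assertion that open-stratum $J$'s carry no negative self-intersection curves; but that assertion does not by itself exclude $m=1$, since $B+xF-E$ has self-intersection $2x-1\ge 0$ for $x\ge 1$ and so is not a class the open-stratum condition can see (for $g=2$, the degeneration $B+2F=(B+F-E)+F+E$ is not formally ruled out by anything said in the proof). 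So the gap you isolate is genuine, but it is a gap in both arguments, not a defect of yours relative to the paper's: a complete write-up would either have to exclude the $m=1$ section by a further argument, or weaken statement (3) to allow a section in $B+xF-E$ and redo the inflation computations of Lemmas \ref{vertinf} and \ref{incdown} with that class (its pairings $(B+xF-E)\cdot B=x$, $\cdot F=1$, $\cdot E=1$ make such an adjustment plausible).
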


\begin{proof}
\begin{enumerate}
    \item    Theorem 1.6 in \cite{Zhang16} gives the Lefschetz fibration where the smooth fibers are embedded $J$ holomorphic curves in the class $F$, and the singular fiber consists of the transversal union of an  the embedded $J$ holomorphic sphere in the class $F-E$ with an embedded $J$ holomorphic sphere  in the class $E$ .
    \item It follows by definition.
    \item It was proved by Li-Liu~\cite{LL01}  that, if $M = \Sigma_g\times
S^2$ or its blow-up, where $g > 0$ and $C = pB + qF$, and
$k(C) =  \frac 12 (c_1(C) + C^2) = \frac 12 (-K\cdot C  + C^2),$
where $k(C)$ means the virtual dimension of the moduli space of curves in class $C$.

Then
$$
\Gr(C) = (p+1)^g, \quad \mbox{provided that }\; k(C)
\ge 0.
$$

In particular, $\Gr(C) \ne 0$ provided that $q \ge g-1$.  When $g = 0$,
$\Gr(C) = 1$ for all classes $C$ with $p,q \ge 0$ and $ p+q > 0$.

 In general, for the genus $g$ cases we always have  $Gr(B+ gF)=2^g>0$, and expect to have $ k(B+gF)=\frac12([2B-2F-\cdots]\cdot [B+gF] +[B+gF]^2)= 2g+2-2g\ge 0.$

 We have a stable curve in the class $B+gF$ for any $J$.
  Since $J\in \mA_{u,{\rm open}}$, there are no curves with  self-intersection less than -1, meaning that for all curves, both $B$ and $F$ have non-negative coefficients.

 Now, looking at the stable representative, we know there is exactly one component where $B$ has coefficient 1, since the $B$-coefficient is non-negative. That component has to be in a class of the type $B+xF$, $B+xF-yE$ or $B+xF+yE$ for some $x\le g, y>0$, because the $F$-coefficient is non-negative.
 We will now show that such a component must be embedded and in the class   $B+xF$ or $B+xF-E.$
First we can eliminate the possibilities of $B+xF+yE, y>0$ because such curves pair negatively with $E$. Then we can also eliminate  the classes $B+xF-yE, y>1$ since they pair negatively with curves in class $F-E.$   Notice that  we do have $B+gF= B+xF +(g-x)F$ and
 $B+gF= B+xF-E +(g-x)F +E$ for $x\le g.$

 Note that such a component must be embedded. Indeed, if we assume there is a singular point, then we have an embedded curve in class $F$ or $F-E$ passing through the singular point.  The intersection number must be greater than 1, and this contradicts the fact that thehomological pairing with $F$ is $1.$

\end{enumerate}

\end{proof}

\subsection{Inflation}\label{s:inflation}

This section is concerned with proving the required stability (invariance) of strata in the spaces of almost complex structures $\mA_{u}$ when we consider the one-point blow-ups for all $g>0$.
 We will use the following {\bf inflation moves} to prove the stability of the strata of almost complex structures:

\begin{prp}\label{moves}

\begin{enumerate}[label=(\Alph*)]
    \item  {\bf Right horizontal inflation along $F$-class curves}:
    $\mA_{u}\subset \mA_{u'}$, whenever $u=[\mu,1,c]$, and $u'=[\mu+s,1,c]$ for all $\mu >g$, $s > 0$. Moreover, these inclusions preserve all strata including the open stratum.
    %for all $\mC\subset S^{<0}$ and for all $\mu >1$, $\eps > 0$.

 \item  {\bf Slant left inflation on open stratum along $B+xF$-class curves}\footnote{ The inflation along $B+xF$ could be achieved using curves $B+xF-E$ as as in Lemma \ref{formal} \label{note:FormalDir}}:
 For any $u'=[\mu',1,c'] $ with $\mu'>g$  and for any $g<\mu <\mu '$ there exists a $ 0<c<1$ so that
    $\mA_{u, open}\supset \mA_{u', open}$, where $u=[\mu,1,c]$, $u'=[\mu',1,c']$.

    \item  {\bf Slant left inflation on strata $ \mA_{u, B-kF} $ along $B-kF$-class curves }:
    For any
    $u'=[\mu',1,c'] $, $\mu'>k>g$ in the positive cone of $B-kF$,  and for any $g<k<\mu <\mu '$  in the positive cone of $B-kF$ there exists a  $ 0<c<1$ so that  $u=[\mu,1,c]$, $u'=[\mu',1,c']$ for all $\mu >g$,
    $\mA_{u, B-kF}\supset \mA_{u', B-kF}$. This means that we can inflate arbitrarily close to the left wall of the $B-kF$ positive cone.

     \item  {\bf Slant left inflation on strata $ \mA_{u, B-kF-E} $ along $B-kF-E$-class curves}:
    For any
    $u'=[\mu',1,c'] $  in the positive range of $B-kF-E$,  and for any $\mu$ with  $g<\mu <\mu' $  anywhere in the positive cone of $B-kF-E$ {\bf except in a shadow  open region in its left most chamber } there is a $0<c<1$ so that
    $\mA_{u, B-kF-E}\supset \mA_{u', B-kF-E}$.

    \item  {\bf Down vertical inflation  along $E$-class curves}:
    $\mA_{u'}\subset \mA_{u}$, whenever $u=[\mu,1,c]$, $u'=[\mu,1, c']$ for all $\mu >1$, $0< c <c'$. Moreover, these inclusions preserve the strata including the open stratum.

 \item  {\bf Two step up vertical inflation in the open stratum along   $B+xF$ , $F-E$ -class curves}: (see footnote~\ref{note:FormalDir} ) %in page~\pageref{note:FormalDir}
    $\mA_{u, open}\subset \mA_{u', open}$, whenever $u=[\mu,1,c]$, $u'=[\mu,1,c']$ for all $\mu \geq g+1$, $0< c<c'<1$. Note that this leaves out the leftmost chamber of the $B-gF$ positive cone.

    \item {\bf Two step up vertical inflation  in strata $\mA_{u, A-kF}$ along  $B-kF$, $F-E$ -class curves}:
    $\mA_{u, A-kF}\subset \mA_{u', A-kF}$, whenever $u=[\mu,1,c]$, $u'=[\mu,1,c']$ for all $\mu >g,\mu \geq k+1 $, $0< c<c'<1$.
    Note that this leaves out the leftmost chamber of the $B-kF$ positive cone.

     \item {\bf Two step up vertical  inflation in strata $\mA_{u, A-kF-E}$ along $F-E$,  $B-kF-E$ -class curves}:
    $\mA_{u, A-kF-E}\subset \mA_{u', A-kF-E}$, whenever $u=[\mu,1,c]$, $u'=[\mu,1,c']$ for all $\mu >g$, $0< c<c'<1$ in the positive cone of $A-kF-E.$

 \item {\bf Zig-Zag upwards inflation in the leftmost chamber of the $B-kF$ positive  repeatedly along $B-kF$ and $F-E$} :
    $\mA_{u, A-kF}\subset \mA_{u', A-kF}$, whenever $u=[\mu,1,c]$, $u'=[\mu,1,c']$ for all $\mu >g, k\leq \mu < k+1 $, $0< c<c'<1$.
    Note that this occurs in the leftmost chamber of the $B-kF$ positive cone.

  \item {\bf Zig-Zag upwards inflation in the leftmost chamber of the open stratum  repeatedly along  $B+xF$ (see footnote \ref{note:FormalDir} ) and $F-E$ }:
    $\mA_{u,open}\subset \mA_{u', open}$, whenever $u=[\mu,1,c]$, $u'=[\mu,1,c']$ for all $\mu >g, g\leq \mu < g+1$, $0< c<c'<1$.
    Note that this takes place in the leftmost chamber of the stability cone, close to the $B-gF$ line.

 \item {\bf Zig-Zag leftwards inflation in the left most chamber of the $B-kF-E$ repeatedly along the curves $E$ and $B-kF-E$}:

 For any %natural number $k\ge g$ and
    $u'=[\mu',1,c'] $  in the left most chamber of the positive cone of $B-kF-E$, i.e,  with $k<\mu' \leq k+1, \mu' -k-c'>0$  and for any $\mu$ with  $k<\mu <\mu' $  there is a  $0<c<1$ so that $ \mu -k-c>0$ and
    $\mA_{u, B-kF-E}\supset \mA_{u', B-kF-E}$.

    Note that this occurs in the leftmost chamber of the $B-kF-E$ positive cone.

\end{enumerate}
 \end{prp}
\begin{proof}

\begin{enumerate}[label=(\Alph*)]

\item By Lemma \ref{lma-ruled2}, we known that for each $J\in
\mA_{u, \mC}$ (including the open stratum) through each point of $M$ there is a stable $J$-holomorphic sphere representing the fiber class $F = [{\rm pt} \times S^2]$.
Inflating along it, we obtain a form in $s P.D[F] +[\mu, 1, c] $= $[\mu+s, 1, c]$, for all $s \in [0,\infty).$

 \item  By Proposition \ref{curvesexist}, we have an embedded curve in the class $B+xF$ for some $x\le g$. We begin with $u'=[\mu', 1, c'], \mu>g.$
After inflation we get a symplectic form in the cohomology class  $s P.D[B+xF] +[\mu', 1, c'] $= $s[x, 1,0]+[\mu', 1, c']$,
  that normalizes to
  \[ [\omega_s]=\biggl[\frac{sx+\mu'}{1+s}, 1, \frac{c'}{1+s}\biggr], \text{ for all } s \in [0,\infty).\]

  Note that $\displaystyle{\lim_{s\to \infty }\frac{sx+\mu'}{1+s} }=x\leq g,$ which means  that we are able to arrive at some point with $ u=[\mu, 1, c],$ for any $\mu>g.$

  \item
  By assumption, we have an embedded $J$- holomorphic curve in the class $B-kF.$ As above, we begin with $u'=[\mu', 1, c']$ and after inflation we obtain a form in the cohomology class $s P.D[B-kF] +[\mu', 1, c'] = s[-k, 1,0]+[\mu', 1, c]$,
  that normalizes to
  \begin{equation*}
     [\omega_s]=\biggl[\frac{-ks+\mu'}{1+s}, 1, \frac{c'}{1+s}\biggr], \text{ for all } s \in \biggl[0,\alpha= \frac{\mu '-k}{2k}\biggr)
  \end{equation*}\label{alphabound}

  Note that $\displaystyle{\lim_{s\to \alpha  }\frac{-ks+\mu'}{1+s} }=  k$ so we have covered all cases $\mu>k$.

  %This means  that we are able to arrive at some point with $ u=[\mu, 1, c],$ for any $u \cdot (A-kF)>0$.
 \item
 By assumption, we have an embedded $J$- holomorphic curve in the class $B-kF-E.$ Commencing with $u'=[\mu', 1, c']$ after inflation we obtain a form  with the cohomology class given by $ s P.D[B-kF-E] +[\mu', 1, c'] = s[-k, 1,1]+[\mu', 1, c]$,
  that normalizes to
  \begin{equation*}
    [\omega_s]=\biggl[\frac{-ks+\mu'}{1+s}, 1, \frac{c'+s}{1+s}\biggr], \text{ for all } s \in \biggl[ 0, \beta= \frac{ \mu'-k- c'}{2k+1}\bigg )
  \end{equation*}\label{betabound}

  Note that $\displaystyle{\lim_{s\to \beta  }\frac{-ks+\mu'}{1+s} }=  k + \frac{c'+\beta}{1+\beta} <k+1$ so we have covered all cases {\it except a shadow region in the left most chamber of the positive cone of $B-kF-E$ } which is completed in the point (K) below.

\item This is straightforward due to the presence of the embedded exceptional curve $E$ in each stratum.

\item
    Inflate along $F-E$ then follow with inflation along $B+xF$.    We get
    \[ [\w_{s, t}] =[\mu,1, c] +s [x,1,0] +t[1,0,1]=  [\mu+sx+t, 1+s, c+t],\]

Normalizing, we get forms in the class \[[\w_{s, t}]=\bigg [\frac{\mu+sx+t}{1+s}, 1, \frac{c+t}{1+s}\bigg ] \]which we want equal to $u'=[\mu,1,c']$ for all  $0< c<c'<1$.
This gives  a system in $s,t$ with $\mu+sx+t=\mu( 1+s)$ and
    $c+t= (c')(1+s).$

  Solving this system of two linear equations, we obtain $t=(\mu-x)s,$ and
  $s=\frac{c'-c}{\mu-x-c'}.$

    Note that the assumptions $  0\le x \le g, \mu\ge g+1$ ensures that  $s>0.$
    Moreover, $t$ automatically satisfies the upper bound in Theorem \ref{ccinf}, since $\omega_{s.t}(F-E)>0.$

  \item This move is similar to the one in the previous point. Inflate along $B-kF$ and $F-E$ to get
    \[[\w_{s, t}]=[\mu,1, c] +s [-k,1,0] +t[1,0,1]=[\mu-ks+t, 1+s, c+t],\]
    Proceeding as above, we get $s=\frac{c'-c}{\mu+k-c'}>0,  t =  (\mu + k)s>0.$

    Since $\mu \ge k +1$ (which means we leave of left-most chamber of the $A-kF$ positive cone),  any such $s$ satisfies the bound from Theorem \ref{ccinf}  $  \frac{\mu-k}{2k} >\frac{1}{ 2k} > s=\frac{c'-c}{\mu+k-c'}.$

    \item  Inflate along $B-kF-E$ and $F-E$, and require
      \[[\w_{s, t}]=[\mu,1, c] +s [-k,1,1] +t[1,0,1]=[\mu-ks+t, 1+s, c+s+t],\]
    where $t=(\mu + k)s$ and
    $c+t+s= c'(1+s).$

    Solving this system of two linear equations,
    we obtain $s=\frac{c'-c}{\mu+k+1-c'}>0,  t =  (\mu + k)s>0.$

    Note that for any choice of $0<c<c'<1$ the required $s$ is within the bounds $\omega_{s,t}(B-kF-E)>0$ of the Theorem \ref{ccinf}: Since  $\mu- k > c',$    we have    $  \frac{\mu-k -c}{2k+1} >\frac{c'-c}{ 2k+1} > s=\frac{c'-c}{\mu+k+1-c'}.$
        Hence we can always inflate along $B-kF-E$ for $s$ amount. We can  inflate along $F-E$ for $t$ amount, because $\omega_{s,t}(F-E)>0.$

  \item  When $k\le \mu < k +1,$ which means $u$ in   the left-most chamber, we need the following zigzag:

 \begin{figure}[ht]

\[
\xy
(0, 0)*{};(20, 0)* {}**\dir{--};
(0, 0)*{}; (0, -20)*{}**\dir{--};
(0, -20)*{};(20, 0)** {}**\dir{-};
(3, -15)*{};(17, -1)** {}**[PineGreen]\dir{-};
(12, -3)*{};(17, -1)** {}**[red]\dir{-};
(12, -3)*{};(14, -1)** {}**[PineGreen]\dir{-};
(9, -3)*{};(14, -1)** {}**[red]\dir{-};
(9, -3)*{};(11, -1)** {}**[PineGreen]\dir{-};
(6, -3)*{};(11, -1)** {}**[red]\dir{-};
(6, -3)*{};(8, -1)** {}**[PineGreen]\dir{-};
(3, -3)*{};(8, -1)** {}**[red]\dir{-};
 (-4, -12)*{{\text{B-pF}}};
  (10, -14)*{\rotatebox{45}{\text{B-pF-E}}};
 \endxy
\]

 \caption{zig-zag1}
  \label{zigzag}
\end{figure}

  Note that to reach $u'$ it suffices to reach any point above it and inflate along $E$, by the same moves of point (G).  Now let $\delta $ be $(1-c')/2.$ Then we can always do the following:

        	\begin{itemize}
		\item Step 0: inflate along $F-E$ for $ 1- \frac {\delta}{2} -c$,  to $[\tilde \mu, 1, \tilde c ],$ where $\tilde \mu = \mu +1- \frac {\delta}{2} -c$ and $\tilde c = 1-\frac {\delta}{2} >c'$ \footnote{ A convenient way of avoiding the $\epsilon, \delta$ in each step of the J-tame inflation process is to use the formal inflation as defined in \cite{Zha17}. }.
		
		\item Step 1:  inflate along $B-kF$ for $\epsilon>0$ small and we obtain
		$[\tilde \mu-k\epsilon, 1+\epsilon, \tilde c],$
		\item Step 1': inflate along $F-E$, for $\epsilon>0$ small and we obtain
		$[\tilde \mu-(k-1)\epsilon, 1+\epsilon, \tilde c+\epsilon].$ Notice that $$\frac{\tilde c+ \epsilon}{1+\epsilon} >\frac{\tilde c}{1} > c'.$$

	\item  Repeat the above two steps for $i$ times, one gets a symplectic form $\w_i$ compatible with $J$ such that $\w_i(B)/ \w_i (F) < \tilde \mu- i(k-1)\epsilon,$ and $\w_i(E)/ \w_i (F) > \tilde c>c'.$

			\item   $\cdots$

			\item   Eventually, we can decrease $\tilde \mu$ to the desired $\mu$ and obtain a form $[\mu, 1, \tilde c],$ where $\tilde c>c'$.

\end{itemize}
\item This zig-zag follows the same steps as above.

\item
Pick a $J\in \mA_{u',B-kF-E}$ and  $[\w']=u=[\mu', 1, c']$ in the left most chamber of the positive cone of $A-kF-E$, i.e. $k+c'<\mu' < k+1$.

Our aim is to show that for any $k< \mu< \mu'$ there is a small $c$  and $J$ taming $\w$ with $[\w]=[\mu, 1,  c]$ where $k+c< \mu'<\mu.$

  \begin{figure}
 \[
\xy
  (30, -14)*{\rotatebox{45}{\text{B-qF-E}}};
   (48, -12)*{{\text{B-(q+1)F}}};
(20, -20)*{};(40, 0)** {}**\dir{--};
(40, 0)*{}; (40, -20)*{}**\dir{-};
(20, -20)*{};(40, -20)* {}**\dir{--};
(37, -19)*{};(32, -18)** {}**[VioletRed]\dir{-};
(37, -19)*{};(37, -12)** {}**[blue]\dir{-};
(32, -19)*{};(32, -18)** {}**[blue]\dir{-};
(32, -19)*{};(27, -18)** {}**[VioletRed]\dir{-};
(22, -18)*{};(27, -19)** {}**[VioletRed]\dir{-};
(27, -19)*{};(27, -18)** {}**[blue]\dir{-};
\endxy
\]
  \caption{zig-zag2}
  \label{zigzag2}
\end{figure}

To do so, we do a sequence of moves down  as in point (E) that make the size of the exceptional curve very small alternated with moves slant leftwards as in point (D).
We just have to show that the areas of the forms in this sequence can be made as close to $k$ as we want. We show that each two steps, a combination of a move $(E)$ followed by a move  $(D)$ yield a symplectic form in a class $u_i=[\mu_i,1, \epsilon_i]$ in the  positive cone of $B-kE-E$, where $u_0=[\mu',1, c']$. Note that each first step inflation along $E$ decreases the sizes of the exceptional classes so we can assume that the sequence $\epsilon_i $ is monotonously decreasing to $0$. Moreover, following the bound $ \beta$ from step $D$ we can pick a sequence $\delta _i$ monotonously decreasing to $0$ so that the sizes $\mu_i$ satisfy the recursive relation $ \mu_{i+1}=  k + \frac{\epsilon_i+\beta_i}{1+\beta_i}-\delta_i $; where  $\beta_i= \frac{ \mu_i-k- \epsilon_i}{2k+1}$.

One can easily verify that the sequence $\mu_i$ is monotone and larger than $k$ so after going to a limit in the recursive relation we obtain that it approaches $k$. That means that we can reach our target $\mu >k$ after a finite sufficiently large number of steps.

\end{enumerate}

 \end{proof}

\begin{lma}\label{formal} For any $ J$  in $\mA_{u, open} $  with $u=[\mu,1,c], \mu >g$  that admits a symplectic embedded curve in a class $B+xF-E $ for some $x\le g$, one can reach any class $[\omega_\lambda]$ in the the formal direction of a $B-xF$ by inflating first along the curve $B+xF-E$ following with inflation along $E$.

\end{lma}

\begin{proof}

The proof is straightforward. Indeed we need to reach a class $[\omega_\lambda]=\biggl[\frac{\lambda x+\mu}{1+ \lambda}, 1, \frac{c}{1+\lambda }\biggr] $ as in point (B) in Proposition \ref{moves}. To do that we inflate for time $\lambda$ along the curve $B-xF-E$ to obtain  $ [\omega'_\lambda]=\biggl[\frac{\lambda x+\mu}{1+ \lambda}, 1, \frac{c+\lambda}{1+\lambda }\biggr] $ then follow by inflation along $E$ to reduce the area of the exceptional class to the desired size.

\end{proof}

\begin{prp}\label{inflation}

\begin{enumerate}

\item  $\mA_{u_1,\mC}= \mA_{u_2,\mC},$  for any $u_1=[\mu_1, 1, c_1]$, $u_2=[\mu_2, 1, c_2]$ in the positive cone of $\mC$, $\mu_i>g$ for all $\mC \subset S^{<0}$. %

\item For the open stratum, $\mA_{u_1, {\rm open}}= \mA_{u_2, {\rm open}},$ whenever $u_1=[\mu_1, 1, c_1]$, $u_2=[\mu_2, 1, c_2]$, and $\mu_i>g$.

\end{enumerate}

 In brief, any two forms in the same chamber have the same strata at every level. The strata of $\mA_{u}$ and $\mA_{u'}$   that are labeled by the same curve are the same, whenever $\mu_i>g.$

\end{prp}

\begin{proof}
The proof is immediate if one follows the inflation moves from                                   Proposition \ref{moves}.
In more detail: the vertical inflation is   (E) for moving downward. (F)   (combined with (J) for $\mu$ in the left most chambers) deals with moving up for open strata. For the rest of vertical moving up:  (G)   (combined with (I) for small  $\mu$ range) deals with $B-kF$ strata and  (H) deals with $B-kF-E$ strata.   Part (A) deals with all the rightward horizontal moves. For the leftward horizontal moves,  the open strata is handled in (B) of Proposition 4.3.   (D) of Proposition 4.3 deals with the cases when $B-kF-E$ is represented and (C) (combined with (K) for small range of $\mu$) of   deals with the cases when $B-kF$ is represented.

The following table summarizes what we need to follow to establish all needed double inclusions. The general idea is that moving right (toward infinity) is always easy, but moving left is harder. We start with a point on the left, name a target in a chamber, and perform large-scale or small-scale moves to hit the target.\\

%\newpage

\begin{table}[ht]
\begin{center}
\resizebox{\textwidth}{!}{\renewcommand{\arraystretch}{2}\begin{tabular}{||c c c c||}
\hline\hline
Direction  &  Strata         &
Class to inflate &
Size/Note \\[0.5ex]
\hline\hline
  $\downarrow$ &                                                       $\mA_{u, \mC} $ or  $\mA_{u, {\rm open}}  $   &
$E$                                              &
\parbox{1.7in}{ For all $c\in (0,1)$, $\mu >0.$} \\[0.5ex]
\hline
    $\uparrow$                                                    &  $\mA_{u, {\rm open}}$   &
$B+xF,$  $x\le g,$ and $F-E$                                                   &
\parbox{1.7in}{ Two-step inflation for $\mu>  g +1$, and use zigzag for   $g <\mu \le  g +1.$ } \\[0.5ex]
\hline
$\uparrow$                                                       &  $\mA_{B-kF-E}$ &
$B-kF-E$                                                 &
\parbox{1.7in}{Two-step inflation for $\mu> k +c$.} \\[0.5ex]
\hline
$\uparrow$                                                      &  $\mA_{B-kF}$   &
\parbox{1.05in}{ $B-kF, F-E$}                 &
\parbox{1.7in}{ Two-step inflation for $\mu> k +1$, and use zigzag for   $k <\mu \le  k +1$.} \\[0.5ex]
\hline
$\longrightarrow$                                                       &  Any strata     &
$F$                                                            &
\parbox{1.7in}{For  $\mu >0.$} \\[0.5ex]
\hline
$\longleftarrow$                                                    &  $\mA_{u, {\rm open}}$   &
$B+xF,$  $x\le g$                                                   &
\parbox{1.7in}{ One-step inflation for $\mu> g$.  } \\[0.5ex]
\hline
$\longleftarrow$                                                       &  $\mA_{B-kF-E}$ &
$B-kF-E$   and $E$                                              &
\parbox{1.7in}{One-step inflation for $\mu> k +1$, and use zigzag for   $k +c <\mu \le  k +1$.} \\[0.5ex]
\hline
$\longleftarrow$                                                      &  $\mA_{B-kF}$   &
\parbox{1.05in}{ $B-kF$}                 &
\parbox{1.7in}{ One-step inflation for $\mu> k$.} \\[0.5ex]
\hline\hline
\end{tabular} }
\caption{Inflation process}\label{inftable}
\end{center}
\end{table}

%\end{proof}

%\section{ Proof of the theorem}
 \end{proof}

\subsection{Proof of Theorem \ref{highgenus}}

Proposition \ref{stabofsymp} following Corollary 2.3 of \cite{McDacs} yields the stability of the symplectomorphism group:

\begin{prp}\label{stabofsymp}   For any $u=[\mu,1,c_1], u'= [\mu+\eps,1,c_2],$ and $u''=[\mu+\eps+\eps',1,c_2], \mu>g, \eps,\eps' > 0$ there are  maps
$\mA_{u} \to \mA_{u'}$ and $G_{u} \to G_{u'}$, well defined up to homotopy, that make the following diagrams commute up to homotopy:
\[
\begin{array}{ccccccc}
{(a)} & & G_{u} &\to & \Diff_0(M_g \# \overline{ \CC P^2}) & \to & \mA_{u}\\
& & \downarrow & & \;\downarrow = & & \downarrow\\
& & G_{u'} &\to & \Diff_0(M_g \# \overline{ \CC P^2}) & \to & \mA_{u'},\\
& & & & & & \\
{ (b)} & &
G_{u} &\to & G_{u'} & & \\
& & & \searrow &\downarrow & & \\
& & && G_{u''}& &
\end{array}
\]
\end{prp}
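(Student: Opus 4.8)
The plan is to realize the inflation inclusions of Proposition \ref{inflation} as a map between the two Kronheimer fibrations \eqref{fibacs} lying over the identity of $\Diff_0(M_g\#\overline{\CC P^2})$, and then to read off the induced map on fibers, together with the commutativity of (a) and (b), formally — exactly as in Corollary 2.3 of \cite{McDacs}. Throughout I would work in the canonical components containing the fixed base point $J_{\rm std}$.

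First I would construct the map $\mA_{u}\to\mA_{u'}$ as a genuine inclusion of subsets. Given $J\in\mA_{u}$, inflate along the fiber class $F$, which is embedded and $J$-holomorphic for every $J$ by Lemma \ref{curvesexist}(1), using the $b^+=1$ compatible inflation Theorem \ref{ccinf}; this produces a form compatible with $J$ in class $[\mu+\eps,1,c_1]$. Then inflate along $E$ and $F-E$ as in Lemma \ref{vertinf} to adjust the exceptional area from $c_1$ to $c_2$, keeping $J$ compatible. This exhibits $J\in\mA_{u'}$, so $\mA_{u}\subseteq\mA_{u'}$; the hypothesis $\mu>g$ is precisely what makes the inflation available on the open stratum, via Proposition \ref{inflation}(1') and (3). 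By the Claim of Section \ref{b+1inf}, this inflation never leaves the canonical isotopy component, so the inclusion is compatible with the $\Diff_0$-equivariant homotopy equivalences $\mA_\bullet\simeq\mT_\bullet$.

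Next I would promote this to a map of fibrations. Both fibrations \eqref{fibacs} have total space $\Diff_0(M)$, and with the common base point $J_{\rm std}\in\mA_{u}\cap\mA_{u'}$ the two projections agree — after the identifications $\mA_\bullet\simeq\mT_\bullet$ of Section \ref{b+1inf} — with the single map $\phi\mapsto\phi_*J_{\rm std}$. Hence the inclusion $\iota:\mA_{u}\hookrightarrow\mA_{u'}$ makes the right-hand square of (a) commute over the identity on $\Diff_0$. The map $G_{u}\to G_{u'}$ on homotopy fibers is then well defined up to homotopy by standard homotopy-fiber functoriality (this is the content of McDuff's Corollary 2.3), and the left-hand square of (a) commutes up to homotopy by construction. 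Diagram (b) follows from the same functoriality: the three maps are the fiber maps induced by the base inclusions $\mA_{u}\hookrightarrow\mA_{u'}\hookrightarrow\mA_{u''}$ and $\mA_{u}\hookrightarrow\mA_{u''}$, and since these inclusions of subsets literally compose, the induced fiber maps compose up to homotopy.

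The main point requiring care — and the only place where the non-minimal geometry enters beyond McDuff's formal argument — is the first step: one must know that a single $J$ stays compatible with forms in both cohomology classes and that the inflation never leaves the canonical component. The former is exactly the curve-existence and inflation package of Propositions \ref{curvesexist}--\ref{inflation}, where $\mu>g$ is essential on the open stratum, and the latter is guaranteed by the bookkeeping of Section \ref{b+1inf}. Everything downstream — the induced fiber map and both commutativities — is then purely homotopy-theoretic and identical to the minimal case.
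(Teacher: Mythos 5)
Your proposal is correct and follows essentially the same route as the paper: the maps $\mA_{u}\to\mA_{u'}$ are taken to be the set-theoretic inclusions furnished by Proposition \ref{inflation} (rightward inflation along $F$ plus the vertical adjustment of Lemma \ref{vertinf}, which is where $\mu>g$ enters), and the maps $G_{u}\to G_{u'}$ together with the homotopy commutativity of (a) and (b) then follow by homotopy-fiber functoriality exactly as in Corollary 2.3 of \cite{McDacs}. The only small inaccuracies — citing Proposition \ref{inflation}(3) (the leftward inclusion, which is irrelevant to constructing the maps) and naming $E$, $F-E$ rather than $B+xF$, $F-E$ as the curves used in the open-stratum vertical step — do not affect the argument, since you correctly defer those steps to Lemma \ref{vertinf}.
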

\begin{proof}
  The maps $\mA_u \to \mA_{u'}$ are
the inclusions $\mA_u \subset \mA_{u'}$.
Since $G_u$ is the fiber of the map
$\Diff_0(M_g \# \overline{ \CC P^2}) \to \mA_{u}$, there are induced maps $G_u \to
G_{u'}$ making diagram $ (a)$ homotopy commute.  The rest is straightforward.

\end{proof}

The main Theorem \ref{highgenus} follows  from  Proposition \ref{moves} along with the above Proposition \ref{stabofsymp}. The first statement of \ref{highgenus} is a direct consequence of Proposition \ref{moves} because $\mA_u$ stay the same when $u$ moves in the stable chambers. For the  second statement of  Theorem \ref{highgenus}   By Proposition \ref{inflation}, the complement of $\mA_u$ in $\mA_{u'}$ is a union of suborbifolds of codimension higher than $cod_u= 2\lfloor \mu \rfloor -2g +2$.  Then by transversality, $\pi_i(\mA_u)$ and $\pi_i(\mA_{u'})$ are the same for any $i \le   2\lfloor \mu \rfloor -2g +1$.  Then by the long exact sequence of the diagram in Proposition \ref{stabofsymp}, for all $i \le   2\lfloor \mu \rfloor -2g$ we have $\pi_i(G_{u,g}^1)= \pi_i(G_{u',g}^1)$.    with $[\w]=u, [\w']=u'.$

\section{Lefschetz fibrations and topological colimit}\label{s:out}

The stability Theorem \ref{highgenus} grants us that the topological colimit $G^{1}_{\infty,g}$ exists for each horizontal line fixing the blow-up size.

 We use the relationship between the space of Lefschetz fibrations and the space of almost complex structures to establish a smooth diffeomorphism model for  $G^1_{\infty,g}$. Then we show that this smooth diffeomorphism model is disconnected and hence concludes that  $G^1_{\infty,g}$   is disconnected.

Proposition \ref{stabofsymp} shows that the homotopy colimit exists.

\subsection{Symplectic isotopy problem for one point blowup}
Now we prepare and give the proof of  Theorem \ref{tlimitintro}.
Note that $\mA_u$ is an open subset of $\mA_{u'}$, for any where $u'=[\mu',1, c],$ s.t.  $\mu'=\mu + \eps$ for all $\eps > 0$. The system of inclusions $i_{\epsilon}: \mA_u \hookrightarrow \mA_{\mu'} $ fits into the framework (described for example in Hatcher Ch 4.G \cite{Hatcherbook}) of a
 homotopy colimit construction and furthermore $\lim_{\mu\to \infty} \mA_u$ of the spaces $\mA_u$ is
homotopy equivalent to the union $\mA_\infty = \cup_\mu \mA_u$.

Recall $J_{{\rm split}}$ is the  product  complex structure on $\Sigma_g\times S^2.$
  As mentioned before,  $J_{{\rm std}}$ is  {\bf the holomorphic blow-up of $J_{{\rm split}}$.} %at a point $p$ of a fixed fiber which is not the intersection point on the section.}

\begin{lma}

There is a map $\Diff_0(M_g \# \overline{ \CC P^2}) \to \mA_\infty$  which induces a homotopy fibration, with $G^1_{g,\infty}$  as homotopy fiber.

\end{lma}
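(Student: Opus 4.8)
The plan is to realise the asserted map as the sequential (homotopy) colimit of the fibrations \eqref{fibacs} along the horizontal line, and then to check that passing to the colimit preserves the fibration, with the fibre becoming the colimit of the fibres.

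First I would fix $c$ and record the directed system. For each $\mu>g$ set $u=u(\mu)=[\mu,1,c]$. By Lemma \ref{incup} (right inflation along the fibre class $F$) the inclusion $\mA_{u}\subset\mA_{u'}$ holds for $u'=[\mu+\eps,1,c]$, and these are open inclusions of the canonical components containing $J_{std}$. Together with the identity maps on $\Diff_0(M)$ and the induced maps $G_u\to G_{u'}$ on fibres, Proposition \ref{stabofsymp}(a) assembles the data into a commuting ladder of homotopy fibrations
\[
G_u \to \Diff_0(M) \to \mA_u, \qquad \mu\nearrow\infty .
\]
Since the $\mA_u$ form an increasing union of open sets, $\mathrm{hocolim}_\mu\,\mA_u$ is homotopy equivalent to $\mA_\infty=\bigcup_\mu\mA_u$, and the constant system on $\Diff_0(M)$ supplies the desired map $\Diff_0(M)\to\mA_\infty$ as the common composite $\Diff_0(M)\to\mA_u\hookrightarrow\mA_\infty$ (well defined up to homotopy, independent of $\mu$). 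Concretely this is the orbit map $\phi\mapsto\phi_*J_{std}$, which lands in $\mA_\infty$ because $\phi\in\Diff_0(M)$ fixes both the cohomology class and the isotopy class and hence preserves each canonical component.

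Next I would identify the fibre by a homotopy-group computation. Because spheres and discs are compact, every class in $\pi_i(\mA_\infty)$ and every nullhomotopy of such a class already lives in some finite stage, so $\pi_i(\mA_\infty)=\mathrm{colim}_\mu\,\pi_i(\mA_u)$; likewise $\pi_i(G^1_{g,\infty})=\mathrm{colim}_\mu\,\pi_i(G_u)$ by the very definition of the homotopy colimit $G^1_{g,\infty}$. Applying the exact functor $\mathrm{colim}_\mu$ to the long exact sequences of the fibrations in the ladder, and using these two identifications together with $\mathrm{colim}_\mu\,\pi_i(\Diff_0(M))=\pi_i(\Diff_0(M))$, produces an exact sequence
\[
\cdots \to \pi_i(G^1_{g,\infty}) \to \pi_i(\Diff_0(M)) \to \pi_i(\mA_\infty) \to \pi_{i-1}(G^1_{g,\infty}) \to \cdots .
\]
Finally, the universal property of the homotopy colimit applied stagewise to the fibre sequences yields a canonical comparison map $G^1_{g,\infty}\to\Fib(\Diff_0(M)\to\mA_\infty)$; the displayed exact sequence is precisely the one carried by this map, so the five lemma shows it is an isomorphism on every $\pi_i$, hence a weak homotopy equivalence. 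This exhibits $\Diff_0(M)\to\mA_\infty$ as a homotopy fibration with homotopy fibre $G^1_{g,\infty}$.

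The main obstacle I anticipate is not the homological algebra --- exactness of filtered colimits plus the five lemma is formal --- but the two \emph{niceness} inputs it rests on. One must know that the ladder is a genuine map of fibration sequences, i.e.\ that the maps $\Diff_0(M)\to\mA_u$ are compatible with the inclusions up to coherent homotopy; this is exactly what Proposition \ref{stabofsymp}(a) provides. One must also know that the inclusions $\mA_u\hookrightarrow\mA_{u'}$ are sufficiently well behaved (open inclusions of the relevant Fr\'echet manifolds/orbifolds) so that $\mathrm{hocolim}\simeq\bigcup$ and so that $\pi_*$ commutes with the colimit; this is where the openness in Lemma \ref{incup} and the suborbifold structure of Proposition \ref{stratum} are used. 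Granting these, the argument is the same sequential-colimit-of-fibrations device that McDuff uses in the minimal case.
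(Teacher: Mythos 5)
Your proposal is correct and follows essentially the same route as the paper: the stagewise orbit maps $\phi\mapsto\phi_*(J_{\rm std})$ with homotopy fiber $G_u$, compatibility via Proposition \ref{stabofsymp}, and the identification of $\mA_\infty$ with the increasing union of the open subsets $\mA_u$. The only difference is that you spell out the fiber identification (filtered colimits of long exact sequences plus the five lemma), which the paper leaves implicit after invoking Proposition \ref{stabofsymp}(b).
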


\begin{proof}
Because $J_{{\rm std}}$ is compatible with some $\om_{\mu}\in \mT_u$ the map $\Diff_0(M_g \# \overline{ \CC P^2}) \to
\mT_\mu$ lifts to
$$
\Diff_0(M_g \# \overline{ \CC P^2}) \to(\mT_u,\mA_u):\quad \phi\mapsto (\phi_*(\om_\mu),
\phi_*(J_{{\rm std}})).
$$
Composing with the projection to $\mA_u$ we get a map
\[
\Diff_0(M_g \# \overline{ \CC P^2}) \longrightarrow \mA_u\]
\[\phi\mapsto \phi_*(J_{{\rm std}})
\]
that is not a fibration but has homotopy fiber $G_u$.

Then by Proposition \ref{stabofsymp} (b), we are able to construct an action $\Diff_0(M_g \# \overline{ \CC P^2}) \to \mA_\infty$ which is compatible with all actions  $\Diff_0(M_g \# \overline{ \CC P^2}) \to\mA_u$ as described above.

\end{proof}

To understand $\mA_\infty$, let us first introduce the space
$\Fol$ of  Lefschetz fibrations of $\Si_g \times S^2\# \overline{ \CC P^2}$ as in Definition \ref{singfol}. In particular, this space only contains  fibrations with one nodal fiber defined as follows:

\begin{dfn}\label{singfol} The space $\Fol$ consist of all the  {\bf genus zero Lefschetz fibrations }  $ \pi: M_g \# \overline{ \CC P^2} \rightarrow  \Sigma_g$   with generic smooth embedded spherical fibers in the $F=[{\rm pt} \times S^2]$ class, and exactly one singular fiber  with two embedded exceptional spherical components each in the classes $E$ and $F-E$. Moreover, we require that the complement of the singular fiber is a smooth fiber bundle over $ \Sigma_g \setminus pt$. Note that these are non-relatively minimal genus zero Lefschetz fibrations, with only one separating nullhomologous vanishing cycle (see \cite[section 8]{GS99}),  which are just one-point blow ups of smooth trivial $S^2-$ fibrations  over a genus $g$ surface.
\end{dfn}

\begin{rmk} Zhang's results from Lemma \ref{lma-ruled2} imply that for each $J$ there is such genus zero  Lefschetz fibration with $J$-holomorphic fibers.
\end{rmk}

We will refer as $\Ff_{\rm std}$ to be the standard blow-up Lefschetz fibration by $J_{{\rm std}}$-holomorphic fibers. Note that by a blow-down of the complex structure, we obtain the split complex structure on $\Sigma_g \times S^2$, and the induced fibration is the split fibration by the spheres.

\begin{lma}
 Let $\Fol_0$ be the path connected component of $\Fol$ that contains $\mF_{{\rm std}}$.  $\mA_\infty$ is weakly homotopic to $\Fol_0$.
\end{lma}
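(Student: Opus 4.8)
The plan is to realize the weak equivalence through the foliation map furnished by Lemma \ref{lma-ruled2}. First I would define a map $\Phi\colon \mA_\infty \to \Fol$ by sending each $J$ to the singular foliation $\Ff_J$ whose smooth leaves are the unique embedded $J$-holomorphic curves in the class $F$ through the points of $Y-y^*$, and whose single nodal leaf consists of the embedded $J$-holomorphic representatives of $E$ and $F-E$ over $y^*$. The existence of these curves is exactly the content of Lemma \ref{lma-ruled2}, and uniqueness of the $F$-curve through each point (since $F\cdot F=0$, positivity of intersections forbids two distinct embedded $F$-curves meeting) shows $\Ff_J$ is well defined; Gromov compactness together with the smooth dependence of these unobstructed curves on $J$ then gives continuity of $\Phi$. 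Because each $\mA_u$ here denotes its canonical component containing $J_{{\rm std}}$, and these components include into one another as $\mu$ grows (Lemma \ref{incup}), the union $\mA_\infty$ is connected; since $\Phi(J_{{\rm std}})=\Ff_{{\rm std}}$, the map $\Phi$ takes values in the single component $\Fol_0$.

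Next I would check that $\Phi\colon \mA_\infty \to \Fol_0$ is surjective with contractible fibers. For surjectivity, given $\Ff\in\Fol_0$ one constructs an almost complex structure making every leaf $J$-holomorphic and verifies, via Theorem \ref{ct} (the Li--Zhang comparison of the tame and compatible cones), that such a $J$ is compatible with a symplectic form whose base area $\mu$ may be taken arbitrarily large, so that $J\in\mA_\infty$. For the fibers, $\Phi^{-1}(\Ff)$ consists of those $J\in\mA_\infty$ for which the leaf distribution $T\Ff$ is $J$-invariant. Writing $T_xM=T_x\Ff\oplus N_x$ for a chosen complement, the condition $J(T_x\Ff)=T_x\Ff$ forces the block $T_x\Ff\to N_x$ to vanish, leaving the contractible space of complex structures on $T_x\Ff$, the contractible space of complex structures on $N_x\cong T_xM/T_x\Ff$, and the affine space of off-diagonal shears $N_x\to T_x\Ff$. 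This bundle over $M$ has contractible fibers, and the tameness constraint cutting out $\mA_\infty$ is fiberwise convex, so the section space $\Phi^{-1}(\Ff)$ is contractible.

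Finally I would upgrade $\Phi$ to a Serre fibration, or at least a quasifibration, over $\Fol_0$: local trivializations of the foliations in $\Fol_0$ combined with the fiberwise contractible structure above allow the lifting of homotopies, so that $\Phi^{-1}(\Ff)\to\mA_\infty\to\Fol_0$ is a homotopy fibration with contractible fiber. Its long exact sequence then yields $\pi_i(\mA_\infty)\cong\pi_i(\Fol_0)$ for all $i$, which is the asserted weak homotopy equivalence. I expect the main obstacle to be the behaviour at the nodal leaf, where $T\Ff$ degenerates across the two components $E$ and $F-E$ over $y^*$: one must show the pointwise contractibility and the local-triviality and lifting arguments persist through the singular fiber, and that intersecting with $\mA_\infty$ (the large-$\mu$ tameness condition) does not destroy contractibility of the fiber there. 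This is precisely the one-point blow-up analog of McDuff's foliation-to-$\mA$ comparison in \cite{McDacs}; the smooth part is a routine adaptation and the only delicate point is the singular fiber.
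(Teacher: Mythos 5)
Your proposal follows essentially the same route as the paper: both send $J$ to its singular foliation by $J$-holomorphic spheres in the classes $F$ and $F-E$ (via Lemma \ref{lma-ruled2}) and conclude by showing this map is a fibration over $\Fol_0$ with contractible fibers, which is exactly the ``standard argument'' the paper cites from \cite{MS17} Ch.~2.5. Your write-up simply makes explicit the fiberwise contractibility and the handling of the nodal leaf that the paper leaves to the reference.
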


\begin{proof}

Observe that there is a map $\mA_\infty \to \Fol_0$ given by taking
$J$ to the Lefschetz fibration of $M_g \# \overline{ \CC P^2}$ by $J$-spheres in class $F$ or $F-E$.  Notice that the map is surjective.  Standard arguments
in \cite{MS17} Ch 2.5 show that this map is a
fibration with contractible fibers.   Hence it is a homotopy equivalence.\\
\end{proof}

\begin{lma}\label{tranfol}
There is a transitive action of $\Diff_0(M_g \# \overline{ \CC P^2})$ on $\Fol_0$.

\end{lma}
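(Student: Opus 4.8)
The plan is to equip $\Fol_0$ with the natural pushforward action of $\Diff_0(M_g \# \overline{\CC P^2})$ and then prove transitivity by the standard ``open orbit $+$ connectedness'' mechanism, where the analytic heart is a parametrized Moser/Ehresmann isotopy-lifting argument. First I would define the action: a diffeomorphism $\phi$ sends a singular foliation $\mF=\{L\}$ to $\phi_*\mF=\{\phi(L)\}$. If $\phi\in\Diff_0$ then $\phi$ acts trivially on homology, so $\phi_*\mF$ again has smooth leaves in the class $F$ together with one nodal leaf $\phi(E)\cup\phi(F-E)$ in the classes $E$ and $F-E$; hence $\phi_*\mF$ is again a singular foliation in the sense of Definition \ref{singfol}. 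Choosing an isotopy $\{\phi_t\}$ from $\id$ to $\phi$, the path $t\mapsto(\phi_t)_*\mF$ stays inside $\Fol$ and shows that $\phi_*\mF$ lies in the same connected component, so the action restricts to a map $\Diff_0(M_g \# \overline{\CC P^2})\times\Fol_0\to\Fol_0$.

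Next I would reduce transitivity to a local statement. Orbits of any group action are pairwise disjoint, and since postcomposition by a fixed group element is a homeomorphism of $\Fol_0$, all orbits are homeomorphic; in particular if the orbit of $\mF_{{\rm std}}$ contains a neighborhood of $\mF_{{\rm std}}$, then every orbit is open. Open orbits partition the connected space $\Fol_0$ into disjoint open sets, forcing a single orbit. Thus it suffices to show that the orbit $\Diff_0(M_g \# \overline{\CC P^2})\cdot\mF_{{\rm std}}$ is a neighborhood of $\mF_{{\rm std}}$.

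The key step is therefore an isotopy-lifting assertion: for every smooth path $t\mapsto\mF_t$ in $\Fol_0$ with $\mF_0=\mF_{{\rm std}}$ there is a smooth path $t\mapsto\phi_t$ in $\Diff_0(M_g \# \overline{\CC P^2})$ with $\phi_0=\id$ and $(\phi_t)_*\mF_0=\mF_t$. Because $\Fol_0$ is locally path-connected, running this over all sufficiently short paths emanating from $\mF_{{\rm std}}$ fills a neighborhood with points of the orbit. I would build $\phi_t$ by a Moser/Ehresmann argument adapted to the singular structure. Each $\mF_t$ is the fiber structure of a proper projection $\pi_t:Z\to Y_t$ with a single singular value $y^*_t$; over the complement of a small disk about $y^*_t$ the map $\pi_t$ is an honest smooth $S^2$-fibration, so the classical isotopy-extension (Ehresmann--Thom) theorem yields a smooth family $\psi_t$ trivializing the regular part of the family, carrying the regular leaves of $\mF_0$ to those of $\mF_t$. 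Near the singular fiber I would use a local normal form for the node: the configuration $E\cup(F-E)$ is a standard pair of transverse embedded spheres meeting at one point, and, exactly as in the contractible-fiber analysis of \cite{MS17} Ch.~2.5 invoked for the previous lemma, the nearby nodal configurations are carried to the standard one by a family $\chi_t$ of diffeomorphisms $C^\infty$-close to the identity. Patching $\psi_t$ and $\chi_t$ with a cutoff supported near $y^*$ then produces the desired $\phi_t\in\Diff_0(M_g \# \overline{\CC P^2})$.

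The main obstacle is precisely the nodal fiber: away from it the lifting is the classical parametrized isotopy extension for fibrations, essentially McDuff's transitivity argument in \cite{McDacs} for the minimal case, but here one must trivialize the varying nodal configurations (the node, the two branches in $E$ and $F-E$, and the way the nearby regular $F$-leaves degenerate onto them) compatibly, and then glue the local trivialization to the global one without breaking the foliation or introducing any monodromy that would leave $\Diff_0$. Conceptually this node-trivialization is what records the blow-down of $E$, so an alternative route is to blow down the exceptional sphere, reduce to McDuff's transitivity of $\Diff_0(\Si_g\times S^2)$ on foliations by $F$-spheres, and keep track of the center of the blow-up as a marked point moving along the leaves; I expect either route to require the same careful gluing near the node as its only genuinely delicate ingredient.
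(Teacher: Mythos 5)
Your proposal is essentially viable but contains one logical slip you must repair, and it organizes the argument differently from the paper. The slip is in the reduction: from ``the orbit of $\mF_{{\rm std}}$ contains a neighborhood of $\mF_{{\rm std}}$'' you conclude that \emph{every} orbit is open because ``all orbits are homeomorphic.'' Group translations are homeomorphisms of $\Fol_0$, but each orbit is invariant under them; no group element carries one orbit onto a different orbit, so openness of the orbit of $\mF_{{\rm std}}$ says nothing a priori about the remaining orbits, and the partition-into-disjoint-open-sets argument does not yet apply. The repair is immediate with the tools you already assembled: your Ehresmann-plus-nodal-normal-form lifting uses no special property of $\mF_{{\rm std}}$, since every element of $\Fol_0$ has the identical local structure (a proper $S^2$-fibration away from one nodal leaf consisting of two embedded spheres in classes $E$ and $F-E$ meeting transversely at one point, as in Definition \ref{singfol}). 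Hence the same argument proves local transitivity at \emph{every} point of $\Fol_0$, so every orbit is open and connectedness finishes; alternatively, lift an entire path from $\mF_{{\rm std}}$ to an arbitrary $\mF'$ by subdividing it into finitely many short segments.

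With that fixed, your route genuinely differs from the paper's. The paper argues by direct matching: from an arbitrary $\mF' \in \Fol_0$ it extracts a smooth section $\Sigma'$ (Poincar\'e dual of a $2$-form transverse to the leaves, avoiding the node), uses path-connectedness of $\Fol_0$ to produce a single $\phi \in \Diff_0(M_g \# \overline{\CC P^2})$ matching the section, the singular leaf, and the singular point with the standard data, and then corrects $\phi$ over a finite cover of the section by disks $D_i$ -- over which the foliation is a product $D^2 \times S^2$ by triviality of the holonomy -- composing finitely many local corrections $\phi_i$. Your proof replaces this section-based matching with the open-orbit mechanism, Ehresmann trivialization over the regular part of the leaf space, and a normal form at the node. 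Both arguments hinge on the same delicate point, which you correctly isolate: gluing the correction near the nodal leaf to the trivialization over the regular part without leaving $\Diff_0$ or breaking the foliation (the paper's ``partition of unity'' composition of the $\phi_i$ is the same issue in different clothing, and is no more detailed than your cutoff patching). Your blow-down alternative -- reducing to McDuff's transitivity on foliations of the minimal ruled surface while tracking the blow-up center as a marked point -- is also workable and is close in spirit to how the paper later encodes $\Dd^1_g$ in Definition \ref{fibergp}; what it buys is that the node-gluing is replaced by the (standard) naturality of blow-up under diffeomorphisms fixing the marked point.
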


\begin{proof}

This follows from the fact that there is a transitive action of (orientation preserving)  $\Diff^+(M_g \# \overline{ \CC P^2})$ on $\Fol$.
As explained below, the existence of such transitive action closely mimics the proof of such transitive action action for the minimal case ruled surfaces (see Proposition 1.1 in McDuff \cite{McDacs}). Specifically, we claim that  the classification result (see Proposition 6.2.3 in \cite{MS17}) of isomorphic genus zero fibrations over a genus $g$ surface  yields a similar classification for non-relatively minimal  Lefschetz fibrations with a single blow up singularity over a genus g surface.

For more details, recall  that (cf.  \cite[Section 8]{GS99}), similar to smooth fibrations,  two Lefschetz fibrations $\pi: M \to (\Sigma_g,p)$ and $\pi' : M' \to (\Sigma_g', p')$ are isomorphic if there is an orientation preserving diffeomorphism  $H: M  \to M'$ and $h: ( \Sigma_g, p) \to (\Sigma_g', p')$ with  $h \circ \pi  =  \pi ' \circ H$ that takes the singular point to singular point.

Consider one such Lefschetz fibration  $\pi: M_g \# \overline{ \CC P^2}\to (\Sigma_g,p)$,  characterized by a single separating null-homologous vanishing cycle, as in Definition \ref{singfol}. Then we claim it is  isomorphic  to the standard holomorphic Lefschetz fibration $ \mF_{std}$  presented as $\pi _{hol}: M_g \# \overline{ \CC P^2}\to (\Sigma_g,p_0)$.      To see that, take $ \Sigma_g  = D_p \cup (\Sigma_g \setminus D_p)$, where we view both the disk $D_p$ and $\Sigma_g \setminus D_p$ as  surfaces with boundary, each yielding two local fibrations $ \pi_{|D_p}$  and $ \pi_{|\Sigma_g \setminus D_p}$ (one singular one smooth) over surfaces with boundaries that have the same clutching data on the boundaries as the corresponding  minimal smooth fibration obtained by blowing down over the disk. Each one of these two local Lefschetz fibrations are diffeomorphic (See  Section 3  of Fuller \cite{Fuller03}) to the corresponding local fibrations of the holomorphic  $\pi_{hol}$. Following the standard bundle theory (cf \cite[section 6.2]{MS17}), the clutching maps are unique up to homotopy. Hence the two local Lefschetz fibrations trivializations  can be glued using the clutching data from their corresponding blow downs to yield  a global diffeomorphism  transiting the given fibration to the standard one.

 A path variation of this argument easily shows that if  a Lefschetz fibration is in the same path component as the standard $\mF_{\rm std}$ then the diffeomorphisms constructed are in  $ \Diff_0(M_g \# \overline{ \CC P^2})$ ; thus we  have  a transitive action of $\Diff_0(M_g \# \overline{ \CC P^2})$  on $ \Fol_0  . $
\end{proof}

  Hence  there is a fibration sequence
\begin{equation}\label{folpre}
  \Dd\cap \Diff_0(M_g \# \overline{ \CC P^2}) \to
\Diff_0(M_g \# \overline{ \CC P^2}) \to \Fol_0,
\end{equation}

where $\Dd$ is the diffeomorphism preserving the fibers in the fibration $\mF_{{\rm std}}.$ We denote this fiber group by $\Dd^1_g$.

\begin{dfn}\label{fibergp}
$\Dd^1_g$ consists of the elements in the identity component of the diffeomorphisms which fit into the commutative diagram $$
\begin{array}{ccc}\label{diagfibergp}

M_g \# \overline{ \CC P^2}& \stackrel \phi{\to} &M_g\# \overline{ \CC P^2}\\
\downarrow & &\downarrow\\
  (M_g, p, F_p) & \stackrel {\phi'} {\to} & (M_g, p, F_p)\\
\downarrow & &\downarrow\\
  (\Sigma_g, {\rm p_0}) & \stackrel {\phi''}{\to}  & (\Sigma_g,{\rm p_0}).
\end{array}
$$

\end{dfn}

Here $p$ is the intersection point $E\cap (F-E)$ of the singular fiber. The first level of the downward arrow means that we contract the $E$ component. We abuse notation here to call $p_0$ the point in $M_g$ {\em after} contracting the curve $E$.

On the second level, $\phi'$ is a diffeomorphism of $M_g$ that keeps the point $p$ fixed and fixes the fiber $F_p$ passing through $p$ fixed as a set. Thus, it preserves all smooth fibers in the standard Lefschetz fibration.

The section $\Sigma_g$ is the holomorphic curve $B_{{\rm std}}$ with respect to the standard complex structure. The map downward is obtained by blowing down the exceptional sphere, and then projecting down to the section curve.

\begin{prp}\label{tlimit}
\begin{enumerate}
    \item $\Dd^1_g$ is weakly homotopic to  $G^1_{\infty,g}$.

\item The group $\Dd^1_g$ is disconnected when $g\ge 2$.

\item When $\mu\to \infty$, s.t. $\pi_i(G^1_{u,g})=   \pi_i(G^1 _{\infty,g})$ for $i \leq \min\{ \cod_u\}-1,$ and hence the groups $G^1_{u,g}$ are disconnected for $g \geq 2$.
\end{enumerate}

\end{prp}

\begin{proof}
For statement (1), note the equation \eqref{folpre} fits into the
commutative diagram:
$$
\begin{array}{ccc}
  \Diff_0(M_g \# \overline{ \CC P^2})  & \to  & \mA_\infty\\
\downarrow & & \downarrow\\
\Diff_0(M_g \# \overline{ \CC P^2}) & \to & \Fol_0,
\end{array}
$$
where the upper map is given as before by the action $\phi\mapsto \phi_*(J_{{\rm std}})$.  Hence there is an induced homotopy equivalence from the homotopy fiber  $G^1_{\infty,g}$  of the top row to the fiber
$\Dd^1_g$ of the second.

To prove statement (2), first note that we have the following Birman exact sequence (cf. \cite{Bir69}) when $(g,n), g>0, n>0$ is not $(1,1)$:

$$ 1 \longrightarrow \pi_1(\Sigma_{g,n-1}) \longrightarrow \Gamma(g,n) \longrightarrow \Gamma(g,n-1) \longrightarrow 1.$$

Here $\Gamma(g,n)$ means the mapping class group of $\Sigma_g$ fixing $n$ points.

In our case, we are looking at $n=1$:
\[ 1 \longrightarrow \pi_1(\Sigma_{g}) \longrightarrow \Gamma(g,{\rm pt}) \longrightarrow \Gamma(g) \longrightarrow 1.\]

  Note that we can write the following fibration
  \[  \Diff(\Sigma_g, p_0)   \longrightarrow \Diff(\Sigma_g) \longrightarrow \Sigma_g,\]

  so that Birman exact sequence (restricted to the identity part of $\Gamma(g)$) becomes the associated long exact sequence:
\[  \Diff(\Sigma_g, p_0) \cap \Diff_0( \Sigma_g) \longrightarrow \Diff_0(\Sigma_g) \longrightarrow \Sigma_g.\]
The above sequence gives an element in the identity component of $\Diff(\Sigma_g)$ but not in the identity component of $\Diff(\Sigma_g, p_0)$, where $p_0$ is the point we will blow-up.
It can be made explicit in the following way: choose a path $\alpha(t)\subset  \Diff(\Sigma_g), t \in [0, 2\pi]$, pushing $p_0$ a   homotopically non-trivial loop on $\Sigma_g.$
 Now $\alpha(0)={\rm id}$ and $\alpha(2\pi) \in \Diff(\Sigma_g, p_0) \cap \Diff_0( \Sigma_g) $  and note that $\alpha(2\pi) $ is  the desired element.

 Next, we lift the path $\alpha(t)$ into $M_g \# \overline{ \CC P^2}$. To do that, first fix $M_g$, $\Sigma_g$ and choose $J_{\rm split}$. There is a natural family $\alpha(t)\times {\rm id} \subset \Diff_0(M_g),$ which acts on the fibers in the trivial manner.  For each of $t$, we have a product complex structure on $M_g$ by pulling back  $J_{\rm split}$ by $\alpha(t)\times {\rm id}$.  We are going to obtain a family of complex structures by blowing up at the points $\alpha(t)|_{p_0} \in M_g$.   This gives us a loop of complex structures $J_t$ on $M_g \# \overline{ \CC P^2}$  where $J_0=J_{{\rm std}}$. Note that by \cite{Zhang16},  each $J_t$   gives rise to a Lefschetz fibration $\mF_t$, as in Definition \ref{singfol}. Geometrically, $\mF_t$ is a loop in $\Fol_0$ starting with the standard Lefschetz fibration $\mF_{\rm std},$ pushing the marked point $p$ along a homotopicaly non-trivial circle on the standard section $\Sigma_g$ for time $t\in[0,2\pi]$.

By the transitivity Lemma \ref{tranfol}, we can use a path $\phi_t$ in $\Diff_0(M_g \# \overline{ \CC P^2})$ to push $\mF_0,$ so that $\phi_t\circ \mF_0= \mF_t.$    Note that $\phi_t$ in $\Diff_0(M_g \# \overline{ \CC P^2})$ pushes $\Ff_{std}$ along this loop.

 Now we focus on the diffeomorphism $\phi_{2\pi}$.  First note that $\phi_{2\pi}$ preserves the Lefschetz fibration $\mF_{\rm std},$ since the fibration $\mF_{2\pi} =\mF_0=\mF_{\rm std}.$  Hence  $\phi_{2\pi} \in \mD^1_g$.   Also, the above paragraph gives an explicit isotopy of $\phi_{2\pi}$ to the identity map in  $\Diff_0(M_g \# \overline{ \CC P^2})$, through the path $\phi_t.$

 We now show that $\phi_{2\pi}$ is not isotopic to {\rm id} in $\mD^1_g$. Suppose there is an isotopy to {\rm id}, then by path lifting of the fibration \ref{folpre},  we would have a fiber-preserving element in $\Diff_0(M_g \# \overline{ \CC P^2})$, so that it is isotopic to identity through a path in $\mD^1_g$. Furthermore, this path pushes the given fibration along the lifting of the loop $\mF_t, t\in[0,2\pi]$.  Now apply the diagram of Definition \ref{fibergp}. We would have an isotopy that would in turn give an isotopy of $(\Sigma_g, p)$, connecting the time $2\pi$ diffeomorphism to identity.  This is a contradiction against the Birman exact sequence. Hence statement (2) holds.

Statement (3) follows from a similar argument as in the stability  Proposition \ref{stabofsymp}.

\begin{equation}\label{Gcomm}
\begin{array}{ccccc}
 G^1_{u,g} & \to & \Diff_0(M_g \# \overline{ \CC P^2}) & \to & \mA_{u}\\
 \downarrow & & \;\downarrow  & & \downarrow\\
 G^1_{u,\infty} &\to & \Diff_0(M_g \# \overline{ \CC P^2}) & \to & \mA_{\infty},\\
\end{array}
\end{equation}

Recall the Definition  \ref{wall} of ${\cod_u}= max \{\cod_A ,  A\in \mS^{< 0}_u, u\cdot A \geq 0 \}$.   Note that the complement of the image of the inclusion map $\mA_u \hookrightarrow \mA_\infty$ is the union of suborbifolds of codimension at least $cod_u+2$. Then,     transversality results in orbifolds imply that $\pi_j(\mA_u)  =\pi_j( \mA_\infty)$ for all $j \le cod_u.$  Then by the above commutative diagram \eqref{Gcomm}, the connecting map of homotopy long exact sequence induces an isomorphism between $\pi_i(G^1_{u,g})$ and $   \pi_i(G^1 _{\infty,g})$ for $i \leq \min\{ \cod_u\}-1.$  Hence the groups $G^1_{u,g}$ are disconnected for $g \geq 2$.

\end{proof}
Note that our  Theorem \ref{tlimitintro} and  Corollary \ref{discon} follow from  Proposition \ref{tlimit}
\subsection{Discussion and Remarks}

\begin{rmk}
  When $g=0$, one can blow-up $S^2\times S^2$ at $k$ points with equal sizes. It is shown in \cite{LLW15} that when $k\le 3$, $ G^k_{u, 0}$ is connected for all $\w$. When $k>3$, $ \pi_0 G^k_{u, 0}$ is  a braid group of $k$ strands on spheres (cf. \cite{LLW3}).  This follows the same pattern as  $\Diff(S^2,k)$, which is the diffeomorphism group of $S^2$ fixing $k$ points.  In particular, when we take the one-point blow-up of $S^2\times S^2$, all arguments in the current paper apply. Even though we have a model for the colimit group, we are not able to show it is connected, but the loop that appears in our proof of Proposition \ref{tlimit} (2)  must give rise to a trivial mapping class since  $\Diff(S^2,1)$ is connected.\\
\end{rmk}

\begin{rmk}

It remains an open question whether the colimit group for $g=1$ is connected or not.  Shevchishin-Smirnov showed in  \cite{SS17elliptic} that there exists a symplectomorphism called elliptic twist along a (-1) torus on the minimal ruled surface or its one-point blow-up.  They further showed that when $\mu\to \infty$ the elliptic twist is isotopic to identity, and a non-trivial mapping class appears in the first chamber in our Figure \ref{cone}.  The two potential non-trivial symplectic mapping classes, elliptic twists as in \cite{SS17elliptic} and the loop in Proposition \ref{tlimit} (2), fail to appear in the colimit group.  In  \cite{SS17elliptic} this is because the (-1) tori has a positive area.  In our case, it is because adding one puncture to a torus does not change its mapping class group.  Hence we still do not know the connectivity status of the group $D_1^1$, although we conjecture that it is connected.

\end{rmk}

\begin{rmk}\label{fiberdehn}

We speculate that the non-trivial mapping class constructed in Proposition \ref{tlimit} has a geometric construction from the fibered Dehn twist by Biran-Giroux \cite{BirGir} Here is some evidence: In our topological colimit,  first take a neighborhood $\mathcal{N}(\Sigma_g)$ of the smooth section $\Sigma_g$ in Definition \ref{fibergp}. With a small perturbation, we can assume that the boundary $ \partial \mathcal{N}(\Sigma_g)$ meets transversely with each section of the fibration $\mF_{{\rm std}}$ at one circle, making it a Seifert fibered 3-manifold. Heuristically, the lifting of the loop $\alpha(t)$ in Proposition \ref{tlimit} acts on $ \partial \mathcal{N}(\Sigma_g) \times [0,1]$,  which looks like a ``topological fibered Dehn twist'':
\begin{align*}
T: \partial \mathcal{N}(\Sigma_g) \times [0,1] &\to \partial \mathcal{N}(\Sigma_g) \times [0,1], \\
(x,t) \hspace{.52cm}&\longmapsto (x\cdot[g(t)\hspace{.12cm}\mbox{\rm mod}\hspace{.1cm} 2\pi],t),
\end{align*}

Here $g:[0,1] \rightarrow \mathbb{R}$ is a smooth function such that $g(t)=0$ near $t=1$ and $g(t)=2\pi$ near $t=0$.

It is natural to conjecture that this construction is a mapping class isotopic to our construction in Proposition \ref{tlimit} part (2).  Furthermore, it is an interesting question to probe whether this topological fibered Dehn twist at $\mu=\infty$ is a direct limit of the symplectic fibered Dehn twist of  Biran-Giroux \cite{BirGir} for $\mu$ finite.

\end{rmk}

\begin{rmk}\rm  Implicit in the above argument is the following
 description of the map  $G^1_{\infty,g} \to \Dd^1_g$.  Let $\Jj_\mu$ denote the
 space of all almost complex structures compatible with $\om_\mu$.
Since the image of the group $G_\mu$ under the map $\Diff_0(M_g \# \overline{ \CC P^2}) \to
\mA_\mu$ is contained in $\Jj_\mu$, there is
a commutative diagram
$$
\begin{array}{ccc} & & \Dd^1_g\\
&  & \downarrow\\
G_\mu &\stackrel{\io}\longrightarrow & \Diff_0(M_g \# \overline{ \CC P^2})\\
\downarrow & & \downarrow\\
\Jj_\mu & \longrightarrow & \Fol_0.
\end{array}
$$
Because $\Jj_\mu$ is contractible, the inclusion $\io:
G_\mu \to \Diff_0(M_g \# \overline{ \CC P^2})$  lifts to a map  $
\Tilde\io: G_\mu \to \Dd^1_g.$  Now take the limit to get $G_\infty \to \Dd^1_g$.
\end{rmk}

\nocite{}
\printbibliography

\end{document}